\crefname{hypothesis}{Hypothesis}{Hypotheses}
\Crefname{ALC@unique}{Line}{Lines}
\colorlet{texcscolor}{blue!50!black}
\colorlet{texemcolor}{red!70!black}
\colorlet{texpreamble}{red!70!black}
\colorlet{codebackground}{black!25!white!25}
\lstdefinestyle{siamlatex}{%
  style=tcblatex,
  texcsstyle=*\color{texcscolor},
  texcsstyle=[2]\color{texemcolor},
  keywordstyle=[2]\color{texemcolor},
  moretexcs={cref,Cref,maketitle,mathcal,text,headers,email,url},
}
\DeclareTotalTCBox{\code}{ v O{} }
{
  fontupper=\ttfamily\color{black},
  nobeforeafter,
  tcbox raise base,
  colback=codebackground,colframe=white,
  top=0pt,bottom=0pt,left=0mm,right=0mm,
  leftrule=0pt,rightrule=0pt,toprule=0mm,bottomrule=0mm,
  boxsep=0.5mm,
  #2}{#1}
\patchcmd\newpage{\vfil}{}{}{}
\title{Strong Variational Sufficiency for Nonlinear Semidefinite Programming and its Implications\thanks{This version: May 4, 2023
}}
\author{Shiwei Wang\thanks{School of Mathematical Sciences, University of Chinese Academy of Science, Beijing, P.R. China. Institute of Applied Mathematics, Academy of Mathematics and Systems Science, Chinese Academy of Sciences, Beijing, P.R. China. (\email{wangshiwei182@mails.ucas.ac.cn}).}
\and Chao Ding\thanks{Institute of Applied Mathematics, Academy of Mathematics and Systems Science, Chinese Academy of Sciences, Beijing, P.R. China. School of Mathematical Sciences, University of Chinese Academy of Science, Beijing, P.R. China. (\email{dingchao@amss.ac.cn}). The work of this author was supported  in  part  by the Beijing Natural Science Foundation (Z190002), National Natural Science Foundation of China under project No. 12071464 and CAS Project for Young Scientists in Basic Research No. YSBR-034.}
\and Yangjing Zhang\thanks{Institute of Applied Mathematics, Academy of Mathematics and Systems Science, Chinese Academy of Sciences, Beijing, P.R. China. (\email{yangjing.zhang@amss.ac.cn}).The work of this author was supported by  the National Natural Science Foundation of China under project No. 12201617.}
\and Xinyuan Zhao\thanks{Department of Mathematics, Beijing University of Technology, Beijing, P.R. China. (\email{xyzhao@bjut.edu.cn}). The work of this author was supported  in  part  by  the National Natural Science Foundation of China under project No. 12271015 and No. 11871002. }}
\begin{document}
\maketitle

\begin{tcbverbatimwrite}{tmp_\jobname_abstract.tex}
\begin{abstract}
Strong variational sufficiency is a newly proposed property, which turns out to be of great use in the convergence analysis of multiplier methods. However, what this property implies for non-polyhedral problems remains a puzzle. In this paper, we prove the equivalence between the strong variational sufficiency and the strong second order sufficient condition (SOSC) for nonlinear semidefinite programming (NLSDP), without requiring the uniqueness of multiplier or any other constraint qualifications. Based on this characterization, the local convergence property of the augmented Lagrangian method (ALM) for NLSDP can be established under strong SOSC in the absence of constraint qualifications. Moreover, under the strong SOSC, we can apply the semi-smooth Newton method to solve the ALM subproblems of NLSDP as the positive definiteness of the generalized Hessian of augmented Lagrangian function is satisfied.
\end{abstract}

\begin{keywords}
strong variational sufficiency, nonlinear semidefinite programming, strong second order sufficient condition, augmented Lagrangian method
\end{keywords}

\begin{MSCcodes}
49J52,  90C22, 90C46
\end{MSCcodes}
\end{tcbverbatimwrite}
\input{tmp_\jobname_abstract.tex}

\section{Introduction}\label{sec:into}
The local optimality of the general optimization problem is a crucial topic for its theoretical importance and wide application. Traditionally, it is studied through the growth condition, e.g., the well-known first or second order optimality condition (cf. e.g., \cite{BShapiro00}). Recently, Rockafellar \cite{rock247} proposed a new property named strong variational sufficiency to deal with this topic geometrically. The key idea of this abstract definition originates from a so-called (strong) variational convexity, which indicates that the values and subgradients of a function are locally indistinguishable from those of a convex function. These two approaches seem to originate from different angles to understand the local optimality, but whether they possess deep connections is an essential issue as it provides not only a better comprehension of optimization theory, but also a solid theoretical foundation in algorithm design. Thus an explicit characterization of strong variational sufficiency is demanding.

 The definition of (strong) variational sufficient condition (Definition \ref{def:svasc}) is officially given in \cite{Roh22}  for the following general composite optimization problem
 \begin{equation}\label{eq:genp}
 	\min_{x\in \mathbb{X}}\quad f(x)+\theta(G(x)),
 \end{equation}
 where $\mathbb{X}$ and $\mathbb{Y}$ are two given Euclidean spaces, $f:\mathbb{X}\to \mathbb{R}$ and $G:\mathbb{X}\to \mathbb{Y}$ are twice continuously differentiable, and $\theta:\mathbb{Y}\to (-\infty,\infty]$ is a closed proper convex function. The strong variational sufficient condition is firstly introduced to deal with the local convexity of primal augmented Lagrangian function and the augmented tilt stability. It has gained more and more attention for its mathematical elegance and wide applications in the convergence analysis of multiplier methods. Several characterizations of this abstract property are also given in \cite{Roh22}. For instance, it is equivalent to the positive definiteness of the Hessian bundle of the augmented Lagrangian function \cite[Theorem 3]{Roh22} or the criterion \cite[Theorem 5]{Roh22} involving quadratic bundle (Definition \ref{def:quadratic bundle}).

 In terms of the connection with traditional optimality conditions, Rockafellar \cite[Theorem 4]{Roh22} shows that the strong variational sufficiency is equivalent to the well-known strong second order sufficient condition (SOSC), which is expressed entirely via the program data, when the function $\theta$ in \eqref{eq:genp} is polyhedral convex (i.e., the epigraph ${\rm epi}\,\theta$ of $\theta$ is a polyhedral convex set). For non-polyhedral problems,  the equivalence is still valid if $\theta$ in \eqref{eq:genp} is the indicator function of the second order cone with $G(\bar{x})\neq 0$, where $\bar{x}$ is a local optimal solution (see \cite[Example 3]{Roh22} for details). However, an explicit and verifiable characterization of strong variational sufficient condition for general non-polyhedral problems remains unknown. In this paper, without loss of generality, we mainly focus on the characterization of strong variational sufficiency for the following nonlinear semidefinite programming (NLSDP):
\begin{equation}\label{eq:NLSDPc}
	\begin{array}{cl}
		\displaystyle{\min_{x\in \mathbb{X}}} & f(x) \\ [3pt]
		{\rm s.t.} &{G(x)\in \mathbb{S}^n_+},
	\end{array}
\end{equation}
where  $\mathbb{S}^n$ is the linear space of all $n\times n$ real symmetric matrices equipped with the usual Frobenius inner product and its induced norm, $\mathbb{S}_+^n$ ($\mathbb{S}_-^n$) is the closed convex cone of all $n\times n$ positive (negative) semidefinite matrices in $\mathbb{S}^n$. One of our contributions lies in uncovering the equivalence between strong variational sufficient condition and strong SOSC (see Definition \ref{def:sigt} for details) for  NLSDP problems without requiring any other constraint qualifications.

An important application of the strong variational sufficiency of NLSDP is the local convergence analysis of the augmented Lagrangian method (ALM). The ALM, which was firstly proposed by Hestenes and Powell in 1969, and it has attained fruitful achievements during the past fifty years. Especially, \cite{rockalmppa1976} uncovers the equivalence between the proximal point algorithm (PPA) and ALM for convex problems. This work is of great importance in establishing the Q-linear convergence rate of the dual sequence for ALM. The conditions for the local linear convergence are further weakened (see e.g., \cite{Luque, CSToh2016}). For nonconvex case, the properties built up under the convexity assumption have become inadequate. Many efforts have been made to explore the convergence properties of ALM for nonconvex problems. See \cite{Bertsekas, Conn91, Lcb93, IKunisch90, FSolodov12, HSarabi20} for more details.
It is worth noting that in \cite{Bertsekas}, the author revealed a kind of local duality based on sufficient conditions for local optimality, which turns out to be the key to understanding the convergence of ALM for nonconvex nonlinear programming.
It follows that there may be a local reduction from nonconvex optimization to convex optimization.
This gives a hint on extending this local duality approach to broader nonconvex problems.
Recently, the newly published work \cite{Rockalm} opens new doors for the convergence rate analysis of ALM for nonconvex problems by using the aforementioned characterization of strong variational sufficient condition without any constraint qualifications.

For NLSDP \eqref{eq:NLSDPc}, the Lagrangian function is defined by
\begin{equation}\label{eq:L-function}
	L(x,Y):=f(x) +\langle Y,G(x)\rangle, \quad (x,Y)\in\mathbb{X}\times\mathbb{S}^n.
\end{equation}
For any $Y\in\mathbb{S}^n$, denote the first-order and second-order derivatives of $L(\cdot,Y)$ at $x\in\mathbb{X}$ by $L_x'(x,Y)$ and $L_{xx}''(x,Y)$, respectively. Given $\rho > 0$, the augmented Lagrangian function of \eqref{eq:NLSDPc} takes the following form (cf. \cite[Section 11.K]{RoWe98} and \cite{ShapS04})
\begin{equation}\label{eq:deflag}
	\mathcal{L}_{\rho}(x,Y):=f(x)+\frac{\rho}{2}{\rm dist}^2(G(x)+\frac{Y}{\rho},\mathbb{S}_+^n)-\frac{\|Y\|^2}{2\rho},
\end{equation}
where ${\rm dist}(z,\mathbb{S}_+^n)$ is the distance from point $z$ to $\mathbb{S}_+^n$.
For a given initial point $(x^0,Y^0)\in\mathbb{X}\times\mathbb{S}^n$ and a constant $\rho^0>0$, the $(k+1)$-th iteration of (extended) augmented Lagrangian method (ALM) for NLSDP \eqref{eq:NLSDPc} proposed by \cite{Rockalm} takes the following form
\begin{equation} \label{eq:subp1}
	\left\{\begin{array}{ll}
		x^{k+1}\approx\arg\min\{\mathcal{L}_{\rho^k}(x,Y^k)\}, \\ [3pt]
		Y^{k+1}=Y^k+\widetilde{\rho}^k\big[G(x^{k+1})-\Pi_{\mathbb{S}_+^n}(G(x^{k+1})+\frac{Y^k}{\rho^k})\big],
	\end{array}\right.
\end{equation}
where $\rho^{k}$, $\widetilde{\rho}^{k}>0$ and $\Pi_{\mathbb{S}_+^n}(\cdot)$ is the metric projection onto $\mathbb{S}_+^n$.  The (extended) ALM \eqref{eq:subp1}  reduces to the traditional ALM when $\widetilde{\rho}^{k}=\rho^{k}$.
To see how ALM \eqref{eq:subp1} works for nonconvex problems without constraint qualifications, consider the following simple example:
{\small\begin{equation}\label{toyexample}
	\begin{array}{cl}
		\min & \displaystyle{\frac{1}{2}x^3}\\
		{\rm s.t.} & -x^2\left[\begin{array}{ccc}
			0 & 0 & 0  \\
			0 & 0 & 0  \\
			0 & 0 & 1
		\end{array}\right]\in\mathbb{S}^3_+.
	\end{array}
\end{equation}}
The optimal solution of \eqref{toyexample} is $\bar{x}=0$ with the corresponding multiplier set ${\cal M}(\bar{x}):=\{Y\mid Y\in\mathbb{S}_-^3\}$. Due to the unboundedness of ${\cal M}(\bar{x})$, we know from \cite[Theorem 4.1]{ZKurcyusz79} that the Robinson constraint qualification \cite{Robinson76} does not hold at $\bar{x}$.  Pick a particular multiplier
$$\overline{Y}=\left[\begin{array}{ccc}
	0 & 0 & 0  \\
	0 & -1 & 0  \\
	0 & 0 & -2
\end{array}\right]\in {\cal M}(\bar{x}).$$ It is clear that $L''_{xx}(\bar{x},\overline{Y})=4>0$, which implies strong SOSC (Definition \ref{def:sigt}) holds at $(\bar{x},\overline{Y})$. We directly apply the  ALM (Algorithm \ref{algo1}) to problem \eqref{toyexample}, and we find that the corresponding ALM subproblem in \eqref{eq:subp1} can be solved exactly. Then, it can be observed from Figure \ref{fig1} that for fixed $\rho^k$ and $\widetilde{\rho}^k$, ${\rm dist}(Y^k,{\cal M}(\bar{x}))$, the distance between the $k$-th iteration $Y^k$ and ${\cal M}(\bar{x})$, converges to zero linearly.
\begin{figure}[h]
	\centering
	\includegraphics[scale=0.16]{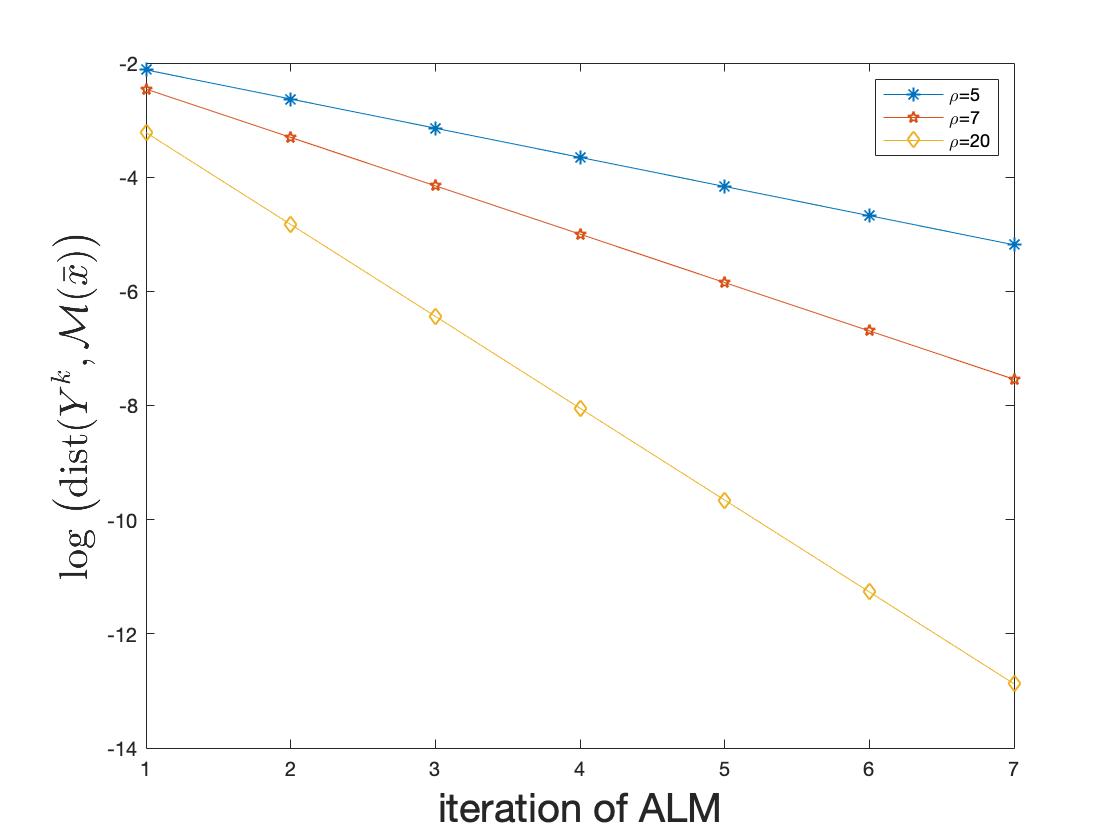}\caption{ALM for solving \eqref{toyexample} with different penalty parameters $\rho^k=\rho$.}\label{fig1}
\end{figure}

In this paper, combing \cite{Roh22} with our equivalence result, we obtain
the Q-linear convergence of dual variables and the R-linear convergence of primal ones of ALM for NLSDP under merely strong SOSC, without requiring the uniqueness of multiplier or any other constraint qualifications. It is worth noting that \cite{SSZhang08} and \cite{KSteck19} obtained the ALM local convergence result under (strong) SOSC and nondegeneracy \cite[Definition 3.2]{Sun06} or strict Robinson constraint qualification (SRCQ) (see e.g., \cite[(11)]{DSZhang} for its definition) respectively, both of which imply the uniqueness of multiplier.
Although \cite[Theorem 2]{WDing21} relaxed the uniqueness assumption of multiplier, additional assumptions \cite[Assumption 1]{WDing21} with respect to multipliers are still needed.

Another important and practical issue in applying ALM is how to solve the ALM subproblem \eqref{eq:subp1}. For convex problems, the successes of SDPNAL \cite{ZSToh}, SDPNAL$+$ \cite{STYZhao, YSToh} and QSDPNAL \cite{LSToh} have proved the efficiency of the semi-smooth Newton-CG method.  Its efficiency relies critically on the positive definiteness of certain generalized Hessian of the augmented Lagrangian function. And we are able to show the positive definiteness of the generalized Hessian under strong SOSC, as a direct application of \cite[Theorem 3]{Roh22} and our equivalence result.

The remaining parts of this paper are organized as follows. In the next section, we introduce some preliminary knowledge in semidefinite cone and strong variational sufficiency. In Section \ref{sec:main}, we propose our main equivalence result for both nonlinear second order cone programming (NLSOC) and NLSDP. Section \ref{sec:app}, as a direct application, copes with the convergence properties of ALM for NLSDP. Moreover, how to solve the subproblems and the corresponding convergence analysis are discussed. In Section \ref{sec:num}, the numerical experiment of an example is given to show the validity of the aforementioned results.  We conclude our paper and make some comments in the final section.

\section{Preliminaries}
For a given Euclidean space $\mathbb{X}$,
let $C$ be any subset in $\mathbb{X}$, ${\rm aff}\,{C}$ is the affine hull of $C$ \cite[page 6]{Rockafellar1970}. The Bouligand tangent/contingent cone of $C$ at $x$ is a closed cone defined by
\[
\mathcal{T}_{C}(x):= \left\{d\in\mathbb{X}\mid \mbox{$\exists\, t^k\downarrow 0$ and  $d^k\to d$ with  $x+t^kd^k\in C$ for all $k$} \right\}.
\]
When $C$ is convex, the normal cone in the sense of  convex analysis \cite{Rockafellar1970} is defined as
$$\mathcal{N}_{C}(x)=\left \{d \in \mathbb{X}\mid\langle d,x'-x\rangle\leq 0 \ \forall\,x'\in C \right\}.$$
For any $x\in C$, the critical cone  associated with ${ y}\in\mathcal{N}_C(x)$ of $C$ is defined in \cite[Page 98]{DoRo14}, i.e., ${\cal C}_{C}({x},{y})=\mathcal{T}_{C}({x})\cap({y})^{\perp}$.

For a set-valued mapping $\mathcal{F}:\mathbb{X}\rightrightarrows\mathbb{X}$, its $\limsup$ means
$$\limsup\limits_{{x}\rightarrow\bar{x}}\mathcal{F}({x}):=\left\{d\in\mathbb{X}\mid \exists\;x_k\rightarrow\bar{x},\; {y}_k\rightarrow d\;{\rm such\;that}\;{y}_k\in\mathcal{F}({x}_k)\;\forall\,k\right\}.$$
The following definitions of proximal and (general/Mordukhovich limiting) subdifferentials of functions are adopted from \cite[Definition 8.45 and 8.3]{RoWe98}.
\begin{definition}
	Consider a function $f:\mathbb{X}\rightarrow(-\infty, +\infty]$ and a point $\bar{x}$ with $f(\bar{x})$ finite. The proximal subdifferential of $f$ at $\bar{x}$ is defined as
	$$\partial^{\pi}f(\bar{x}):=\{{d}\mid \exists\; l>0, r>0\;{\rm such\; that}\;f(x)\geq f(\bar{x})+\langle{d},{x}-\bar{x}\rangle-l\|{x}-\bar{x}\|^2\;\forall\,{x}\in{\cal B}_{r}(\bar{x})\},$$
	where ${\cal B}_{r}(\bar{x})$ is the ball centered at $\bar{x}$ with radius $r$.
	The (general/Mordukhovich limiting) subdifferential of $f$ at $\bar{x}$ is $\partial f(\bar{x}):=\limsup\limits_{{x}\stackrel{f}{\rightarrow}\bar{x}}\partial^{\pi}f({x})$, where ${x}\stackrel{f}{\rightarrow}\bar{x}$ signifies ${x}\rightarrow\bar{x}$ with $f({x})\rightarrow f(\bar{x})$.
\end{definition}
Note that in an Euclidean space, the (general) subdifferential also can be constructed via the regular/Fr\'{e}chet subdifferentials  of functions $\widehat{\partial}f$ \cite[Definition 8.3]{RoWe98}. It is well-known that these two definitions are equivalent (see e.g., \cite[page 345]{RoWe98} or \cite[Theorem 1.89]{Mordukhovich06} for details). In particular, when $f$ is convex and finite at the corresponding point, the proximal and (general) subdifferentials coincide with the subdifferential in the sense of convex analysis \cite{Rockafellar1970}.

\subsection{Variational analysis of NLSDP}\label{sec:sub2.1}
Let  $A \in\mathbb{S}^n$ be given. We use $\lambda_{1}(A)\ge \lambda_2(A) \ge \ldots \ge \lambda_{n}(A)$ to denote the eigenvalues of $A$ (all real and counting multiplicity) arranging in  nonincreasing order and use $\lambda(A)$ to denote the vector of the ordered eigenvalues of $A$. Let $\Lambda(A):= {\rm Diag}(\lambda(A))$. Also, we use $v_1(A)> \dots> v_d(A)$ to denote the different eigenvalues and
$\zeta_l:=\{i\mid \lambda_i(A)=v_l(A)\}$.
Consider the eigenvalue decomposition of $A$, i.e., $A={P} \Lambda(A){P}^{T}$, where ${P}\in{\cal O}^{n}(A)$ is a corresponding
orthogonal matrix of the orthonormal eigenvectors.
By considering the index sets of positive, zero, and negative eigenvalues of $A$, we are able to write $A$ in the following form
{\footnotesize
\begin{equation}\label{eq:eig-decomp}
	A= \left[\begin{array}{ccc}
		{P}_{\alpha} & {P}_{\beta} & {P}_{\gamma}
	\end{array}\right] \left[\begin{array}{ccc}
		\Lambda(A)_{\alpha\alpha} & 0 & 0 \\ [3pt]
		0 & 0 & 0\\ [3pt]
		0 & 0 & \Lambda(A)_{\gamma\gamma}
	\end{array}\right]\left[\begin{array}{c}
		{P}_{\alpha}^T \\ [3pt]
		{P}_{\beta}^T \\
		{P}_{\gamma}^T
	\end{array}\right],
\end{equation}}
where $\alpha:=\{i: \lambda_i(A)>0\}$, $\beta:=\{i: \lambda_i(A)=0\}$ and $\gamma:=\{i: \lambda_i(A)<0\}$.

The critical cone of $\mathbb{S}_+^n$ at ${Y}\in{\cal N}_{\mathbb{S}_+^n}({X})$ with $A=X+Y$ is given by (cf. \cite[equations (11) and (12)]{CSun08})
\begin{equation}\label{eq:cricsdp}
	{\cal C}_{\mathbb{S}_+^n}({X},{Y})=\{H\in\mathbb{S}^n\mid P^T_{\beta}HP_{\beta}\in\mathbb{S}^{|\beta|}_+,\;P^T_{\beta}HP_{\gamma}=0,\;P^T_{\gamma}HP_{\gamma}=0\}
\end{equation}
and the affine hull of ${\cal C}_{\mathbb{S}_+^n}({X},{Y})$ is
\begin{equation}\label{eq:affcsdp}
	{\rm aff}\,{\cal C}_{\mathbb{S}_+^n}({X},{Y})=\{H\in\mathbb{S}^n\mid P^T_{\beta}HP_{\gamma}=0,\;P^T_{\gamma}HP_{\gamma}=0\}.
\end{equation}
The  NLSDP \eqref{eq:NLSDPc} can be written in the following form
\begin{eqnarray}
	%\mbox{(SDP)}~~~~
	\min\quad && f(x)+\delta_{\mathbb{S}^n_+}(G(x)), \label{eq:NLSDP}
\end{eqnarray}
where
$\delta_{\mathbb{S}^n_+} : \mathbb{S}^n\rightarrow (-\infty,\infty]$ is the indicator function of positive semidefinite cone $\mathbb{S}^n_+$.
Denote ${S}_{KKT}(a,b)$ the solution set of the KKT optimality condition for problem \eqref{eq:NLSDP}, i.e.,
\begin{equation}\label{eq:KKT-NLSDP-p1}
	{S}_{KKT}(a,b)=\left\{(x,Y)\in \mathbb{X}\times\mathbb{S}^n \Bigg | \left.
	\begin{array}{l}
		L_x'(x,Y)-a=0,\\ [3pt]
		\mathbb{S}_+^n\ni (G(x)-b)\perp Y \in \mathbb{S}_-^n.
	\end{array}
	\right.\right \}.
\end{equation}
For any KKT pair $(\bar{x},\overline{Y})$ that satisfies the KKT condition with $(a,b) = (0,0)$, we call $\bar{x}$ a stationary point.
Suppose $\bar{x}$ is a stationary point. Define ${\cal M}(\bar{x})$ as the set of all multipliers $Y\in\mathbb{S}^n$ satisfying the KKT condition \eqref{eq:KKT-NLSDP-p1}, i.e.,
\begin{equation}\label{eq:defcalm}
	{\cal M}(\bar{x})=\{Y\in\mathbb{S}^n \mid (\bar{x},Y)\in {S}_{KKT}(0,0)\}.
\end{equation}

The definition of strong SOSC for NLSDP, which demands the supremum of \eqref{eq:defssosc} over $Y\in{\cal M}(\bar{x})$ holds, is originally given by \cite{Sun06} as an analogue for Robinson \cite{Robin80} for the nonlinear programming (NLP). However, the strong SOSC mentioned here is slightly different in only requiring the validity of \eqref{eq:defssosc} at $\overline{Y}$. It is worth to note when ${\cal M}(\bar{x})$ is a singleton, both of them are the same.
\begin{definition}\label{def:sigt}\cite[Definition 3.2]{Sun06}
	Let $\bar{x}$ be a stationary point of NLSDP \eqref{eq:NLSDP} and $\overline{Y}\in{\cal M}(\bar{x})$. We say the strong second order sufficient condition (SOSC) holds at $(\bar{x},\overline{Y})$ if
	\begin{equation}\label{eq:defssosc}
		\langle L_{xx}''(\bar{x},\overline{Y})d,d\rangle-\varUpsilon_{G(\bar{x})}\big(\overline{Y},G'(\bar{x}){d}\big)>0\quad\forall\; 0\neq G'(\bar{x})d\in{\rm aff}\,{\cal C}_{\mathbb{S}_+^n}(G(\bar{x}),\overline{Y}).
	\end{equation}
	where $\varUpsilon_{G(\bar{x})}\big(\overline{Y},G'(\bar{x}){d}\big)=2\langle\overline{Y},(G'(\bar{x}){d})G(\bar{x})^{\dagger}(G'(\bar{x}){d})\rangle$ is the $\sigma$-term and $G(\bar{x})^{\dagger}$ is the generalized inverse matrix of $G(\bar{x})$.
\end{definition}

Suppose $A=G(\bar{x})+\overline{Y}$ possesses the decomposition \eqref{eq:eig-decomp}. From \cite[page 386]{CSun08}, we know the $\sigma$-term takes the explicit form of
\begin{equation}\label{eq:sigtsdp}
	\varUpsilon_{G(\bar{x})}\big(\overline{Y},H\big)=2\sum_{i\in\alpha,j\in\gamma}\frac{\lambda_j(A)}{\lambda_i(A)}(\widetilde{H}_{ij})^2,
\end{equation}
where $\widetilde{H}=P^THP$.

Next, we introduce some essential notation for our main result. The (second order) generalized differentiability  of the augmented Lagrangian function ${\cal L}_{\rho}$ defined by \eqref{eq:deflag} for NLSDP \eqref{eq:NLSDP} has been explicitly established in \cite{SSZhang08}.  In fact, it is well-known that ${\cal L}_{\rho}$ is continuously differentiable with
\begin{equation}\label{eq:algrad}
	({\cal L}_{\rho})_x'(x,Y)=f'(x)+\rho G'(x)^*\Pi_{\mathbb{S}_-^n}(G(x)+\rho^{-1}Y),
\end{equation}
where $\Pi_{\mathbb{S}_-^n}(\cdot)$ is the metric projection over $\mathbb{S}_{-}^n$ and $G'(x)^*$ denotes the adjoint of the corresponding linear mapping. Moreover, since $({\cal L}_{\rho})_x'$ is  Lipschitz continuous, by using Rademacher’s theorem, the B(ouligand)-subdifferential of $({\cal L}_{\rho})_x'$ at $(x,Y)$ is given by
\begin{equation}\label{eq:partb}
	\partial_B\big(({\cal L}_{\rho})'_x\big)(x,Y):=\{\lim_{k\rightarrow\infty}(({\cal L}_{\rho})_{x}')'(x^k,Y^k)\mid (x^k,Y^k)\in{\cal U},\ (x^k,Y^k)\rightarrow(x,Y)\},
\end{equation}
where ${\cal U}$ is the set of Fr\'{e}chet-differentiable points of $({\cal L}_{\rho})_x'$.
It follows from \cite[(18)]{SSZhang08} (using \cite[Lemma 2 and 3]{SSZhang08}) that
\begin{align*}
	&\pi_x\partial_B\big(({\cal L}_{\rho})_x'\big)(x,Y)(\Delta x)\\
	&=L_{xx}''(x,\rho\Pi_{\mathbb{S}_-^n}(G(x)+\rho^{-1}Y))(\Delta x)+\rho G'({x})^*\partial_B\Pi_{\mathbb{S}_-^n}(G({x})+\rho^{-1}Y) G'({x})(\Delta x),
\end{align*}
where $\pi_x\partial_B\big(({\cal L}_{\rho})_x'\big)(x,Y)$ is the projection of $\partial_B\big(({\cal L}_{\rho})_x'\big)(x,Y)$ onto the space $\mathbb{X}$ and  $\partial_B\Pi_{\mathbb{S}_-^n}(G({x})+\rho^{-1}Y)$ is the B-subdifferential of $\Pi_{\mathbb{S}_-^n}(\cdot)$ at $G({x})+\rho^{-1}Y$. Actually, the set $\pi_x\partial_B\big(({\cal L}_{\rho})_x'\big)(x,Y)$ is recently defined as the $x$ part of the Hessian bundle \cite[(3.1)]{Roh22} for the augmented Lagrangian function ${\cal L}_{\rho}$ at $(x,Y)$ (see \cite[(3.6)]{Roh22} for detail).

Let $\bar{x}$ be the stationary point of \eqref{eq:NLSDP}. For each $\overline{Y}\in{\cal M}(\bar{x})$ and $W\in\partial_B\Pi_{\mathbb{S}_-^n}(G(\bar{x})+\rho^{-1}\overline{Y})$, define the following mapping
\begin{equation}\label{eq:algrad2}
	{\cal A}_{\rho}(\overline{Y},W):=L_{xx}''(\bar{x},\overline{Y})+\rho G'(\bar{x})^*WG'(\bar{x}).
\end{equation}
Let $\overline{Z}=G(\bar{x})+\overline{Y}$ possesses decomposition \eqref{eq:eig-decomp} with $\overline{P}\in{\cal O}^{n}(\overline{Z})$. It follows from \cite[Lemma 5]{SSZhang08} that $W\in\partial_B\Pi_{\mathbb{S}_-^n}(\overline{Z})$ if and only if there exists $W_0\in\partial_B\Pi_{\mathbb{S}_-^{\lvert\beta\rvert}}(0)$ such that for all $H\in\mathbb{S}^n$
{\footnotesize
\begin{equation*}
	W(H)=\overline{P}\left[\begin{array}{ccc}
		0 & 0 & \Sigma_{\alpha\gamma}\circ\overline{P}_{\alpha}^TH\overline{P}_{\gamma}\\
		0 & W_0(\overline{P}_{\beta}^TH\overline{P}_{\beta}) & \overline{P}_{\beta}^TH\overline{P}_{\gamma} \\
		\Sigma_{\gamma\alpha}\circ\overline{P}_{\gamma}^TH\overline{P}_{\alpha} & \overline{P}_{\gamma}^TH\overline{P}_{\beta} & \overline{P}_{\gamma}^TH\overline{P}_{\gamma}
	\end{array}\right]\overline{P}^T,
\end{equation*}}
where
\begin{equation*}
\begin{cases}		\Sigma_{ij}=1-\frac{\max\{\lambda_i,0\}+\max\{\lambda_j,0\}}{\lvert\lambda_i\rvert+\lvert\lambda_j\rvert}, & (i,j)\notin\beta\times\beta\\
\Sigma_{ij}	\in[0,1] & (i,j)\in\beta\times\beta
	\end{cases}
\end{equation*}
with $\lambda_i:=\lambda_i(\overline{Z})$ for short. Let ${\cal Q}^n$ denote the set of $n$-dimensional orthogonal matrix.
Also, $W_0\in\partial_B\Pi_{\mathbb{S}_-^{\lvert\beta\rvert}}(0)$ if and only if there exist $Q\in{\cal Q}^{\lvert\beta\rvert}$ and $\Omega\in\mathbb{S}^{\lvert\beta\rvert}$ with entries $\Omega_{ij}\in[0,1]$ such that for all $H\in\mathbb{S}^{\lvert\beta\rvert}$,
$$W_0(H)=Q(\Omega\circ(Q^THQ))Q^T.$$
By \cite[Lemma 9]{SSZhang08}, we have
{\small \begin{align}
	\langle d,{\cal A}_{\rho}(\overline{Y},W)d\rangle=&\langle d,L_{xx}''(\bar{x},\overline{Y})d\rangle+\rho\sum_{i,j\in\gamma}(\overline{P}^T(G'(\bar{x})d)\overline{P})_{ij}^2+2\rho\sum_{i\in\beta,j\in\gamma}(\overline{P}^T(G'(\bar{x})d)\overline{P})_{ij}^2\label{eq:expcalA}\\
	&+2\rho\sum_{i\in\alpha,j\in\gamma}\frac{-\lambda_j}{\rho\lambda_i-\lambda_j}(\overline{P}^T(G'(\bar{x})d)\overline{P})_{ij}^2+\rho\sum_{i,j\in\beta}(\Omega_{\rho})_{ij}({P}^T(G'(\bar{x})d){P})_{ij}^2,\nonumber
\end{align}}
where $\Omega_{\rho}\in\mathbb{S}^{\lvert\beta\rvert}$ with entries $(\Omega_{\rho})_{ij}\in[0,1]$, $P=[\overline{P}_{\alpha}\;\overline{P}_{\beta}Q\;\overline{P}_{\gamma}]$ with $Q\in{\cal Q}^{\lvert\beta\rvert}$.

\subsection{Strong variational sufficiency}
The definition of (strong) variational sufficient condition is given in \cite{Roh22}  for general composite optimization problem \eqref{eq:genp}.
We can recast \eqref{eq:genp} in the form
\begin{equation}\label{eq:pgpert}
	\min\;\phi(x,u)\;\mbox{subject to}\;u=0,\quad\mbox{where}\;\phi(x,u)=f(x)+\theta(G(x)+u).
\end{equation}
The first order local optimality condition for \eqref{eq:genp}  of $\bar{x}$ is the existence of $\overline{Y}$ such that
$$L_x'(\bar{x},\overline{Y})=0\quad \mbox{with}\quad\overline{Y}\in\partial\, \theta(G(\bar{x})). $$
Define
\begin{equation}\label{eq:phir}
\phi_r:=\phi(x,u)+\frac{r}{2}|u|^2.
\end{equation}
The variational (strong) convexity, which is firstly proposed in \cite{rock247}, refers to the existence of open convex neighborhoods ${\cal W}$ of $(\bar{x},0)$ and ${\cal Z}$ of $(0,\overline{Y})$ such that there exists a proper closed (strongly) convex function $\psi\leq\phi_r2$ on ${\cal W}$ such that
\begin{equation*}
	({\cal W}\times{\cal Z})\cap{\rm gph}\,\partial\psi=({\cal W}\times{\cal Z})\cap{\rm gph}\,\partial\phi_r
\end{equation*}
and for $(x,u;v,y)$ belonging to this common set, $\psi(x,u)=\phi_r(x,u)$.
\begin{definition}\label{def:svasc}\cite{Roh22}
	The (strong) variational sufficient condition for local optimality in \eqref{eq:pgpert} holds with respect to $\bar{x}$ and $\overline{Y}$ satisfying the first order condition if there exists $r>0$ such that $\displaystyle\phi_r(x,u)$ is variationally (strongly) convex with respect to the pair $\big((\bar{x},0),(0,\overline{Y})\big)$ in ${\rm gph}\,\partial\phi_r$.
\end{definition}

To characterize the above property, the tools of the second subderivative and generalized quadratic form are needed.
\begin{definition}\cite[Definition 13.6]{RoWe98}
	Let $\bar{x}$ be a point where the function $f:\mathbb{X}\rightarrow[-\infty,+\infty]$ is finite. $f$ is twice epi-differentiable at $\bar{x}$ for $v$ if the functions
	$$\Delta^2_{t}f(\bar{x}\mid v)(u)=\frac{f(\bar{x}+tu)-f(\bar{x})-t\langle v,u\rangle}{\frac{1}{2}t^2}$$
	epi-converge to $d^2f(\bar{x}\mid v)$ as $t\downarrow0$, where $d^2f(\bar{x}\mid v)$ is the second subderivative of $f$ at $\bar{x}$ for $v$ defined as
	$$d^2f(\bar{x}\mid v)(w)=\liminf_{t\downarrow0,u\rightarrow w}\Delta^2_{t}f(\bar{x}\mid v)(u)$$
\end{definition}
We know from \cite[Theorem 3.6]{mohammadi20} that $\delta_{\mathbb{S}_+^n}$ is twice epi-differentiable at $X$ for $Y$ with $Y\in{\cal N}_{\mathbb{S}_+^n}(X)$.

\begin{definition}\label{def:genguadf} \cite{Roh22}
	A generalized linear mapping ${\cal R}$ from $\mathbb{X}$ to $\mathbb{Y}$, is a set-valued mapping for which ${\rm gph}\,{\cal R}$ is a subspace of $\mathbb{X}\times\mathbb{Y}$. This means that ${\rm dom}{\cal R}$ is a subspace $\mathbb{Z}$ of $\mathbb{X}$, ${\cal R}(0)$ is a subspace $\mathbb{Z}'$ of $\mathbb{Y}$, and there is an ordinary linear mapping ${\cal R}_0:\mathbb{Z}\rightarrow\mathbb{Y}$ such that ${\cal R}(x)={\cal R}_0(x)+\mathbb{Z}'$ for $x\in\mathbb{Z}$.
	We call a function $q: \mathbb{X}\rightarrow(-\infty,+\infty]$ is a generalized quadratic form on $\mathbb{X}$ if $q(0)=0$ and the subgradient mapping $\partial q:\mathbb{X}\rightrightarrows\mathbb{X}$ is generalized linear. A function $g$ on $\mathbb{X}$ will be called generalized twice differentiable at $x$ for a subgradient $y$ if it is twice epi-differentiable at $x$ for $y$ with the second-order subderivative $d^2g(x\mid y)$ being a generalized quadratic form.
\end{definition}

The following definition of the quadratic bundle is taken from \cite{Roh22}, which is essential in characterizing the strong variational sufficiency.
\begin{definition}\label{def:quadratic bundle}
	For general optimization problem \eqref{eq:genp}, suppose $(\bar{x},\overline{Y})$ is a KKT pair. The quadratic bundle of $\theta$ is defined as
	\begin{equation*}
		{\rm quad}\,\theta(G(\bar{x})\mid\overline{Y})=\left\{
		\begin{array}{l}
			\mbox{the collection of generalized quadratic forms}\; q\; \mbox{for which}\\
			\exists(X^k,Y^k)\rightarrow(G(\bar{x}),\overline{Y})\;\mbox{with}\;\theta\;\mbox{generalized twice differentiable}\\
			\mbox{at}\;X^k\;\mbox{for}\;Y^k\;\mbox{and such that the generalized quadratic}\\
			\mbox{forms}\;q_k=\frac{1}{2}d^2\theta(X^k\mid Y^k)\;\mbox{converge epigraphically to}\;q.
		\end{array}
		\right.
	\end{equation*}
\end{definition}
As mentioned in \cite{Roh22}, the variational sufficient condition guarantees the local optimality for \eqref{eq:pgpert}. The following result is taken from \cite[Theorem 5]{Roh22}, which is useful for the subsequent analysis.
\begin{proposition}\label{prop:thm5}
	For general optimization problem \eqref{eq:genp},
	strong variational sufficient condition for local optimality with respect to $(\bar{x},\overline{Y})$ is equivalent to that  every $q\in{\rm quad}\,\theta(G(\bar{x})\mid\overline{Y})$ has
	\begin{equation}\label{svs-equ}
		\frac{1}{2}\langle L_{xx}''(\bar{x},\overline{Y})d,d\rangle+q(G'(\bar{x}){d})>0\quad{\rm when}\;d\neq0,
	\end{equation}
	where ${\rm quad}\,\theta(G(\bar{x})\mid\overline{Y})$ is the quadratic bundle of $\theta$.
\end{proposition}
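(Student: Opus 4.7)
The plan is to chain together three equivalences: strong variational sufficiency $\Leftrightarrow$ local strong monotonicity of $\partial\phi_r$ near the KKT pair $\Leftrightarrow$ positive definiteness of the second subderivative of $\phi_r$ at all nearby subgradient points $\Leftrightarrow$ the stated quadratic-bundle condition after eliminating the slack variable $u$.

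First I would invoke the classical characterization that a proper lower semicontinuous function is strongly convex if and only if its subgradient mapping is strongly monotone. Applied to the variational setting of Definition \ref{def:svasc}, this shows that variational strong convexity of $\phi_r$ with respect to $\big((\bar{x},0),(0,\overline{Y})\big)$ is equivalent to the existence of $\sigma>0$ such that $\partial\phi_r$ is $\sigma$-strongly monotone on the intersection of ${\rm gph}\,\partial\phi_r$ with some neighborhood of the pair. This is purely an infinitesimal property and need not come from a globally convex structure.

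Next I would carry out a second-order translation. Local strong monotonicity of $\partial\phi_r$ is captured by requiring that at every nearby point $(x,u)$ at which $\phi_r$ is generalized twice differentiable for some subgradient $(v,y)$, the generalized quadratic form $\tfrac12 d^2\phi_r(x,u\mid v,y)$ is positive definite. Applying a second-subderivative chain/sum rule to the decomposition $\phi_r(x,u)=f(x)+\theta(G(x)+u)+\tfrac{r}{2}|u|^2$ yields, at a nearby point,
\[
\tfrac12 d^2\phi_r(x,u\mid v,y)(\xi,\eta) = \tfrac12\langle L_{xx}''(x,y)\xi,\xi\rangle + q_{x,u,y}(G'(x)\xi+\eta) + \tfrac{r}{2}|\eta|^2,
\]
where $q_{x,u,y}=\tfrac12 d^2\theta(G(x)+u\mid y)$. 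Passing $(x^k,u^k,y^k)\to(\bar{x},0,\overline{Y})$ and extracting epi-convergent subsequences of $q_{x^k,u^k,y^k}$ produces precisely the generalized quadratic forms $q\in{\rm quad}\,\theta(G(\bar{x})\mid\overline{Y})$ in the sense of Definition \ref{def:quadratic bundle}. For $r$ large, positive definiteness over all $(\xi,\eta)$ reduces, via minimization in $\eta$, to positive definiteness in $\xi$ alone, collapsing at the limit to the stated condition $\tfrac12\langle L_{xx}''(\bar{x},\overline{Y})d,d\rangle + q(G'(\bar{x})d) > 0$ for $d\neq 0$.

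The delicate step, I expect, will be the implication from the pointwise condition on every $q\in{\rm quad}\,\theta(G(\bar{x})\mid\overline{Y})$ back to uniform strong monotonicity of $\partial\phi_r$ in a whole neighborhood. This requires a compactness/contradiction argument: if the required strong monotonicity failed, one would extract a sequence of generalized twice differentiable points violating positivity by vanishing margins, pass to an epi-convergent limit of the rescaled second subderivatives, and obtain some $q\in{\rm quad}\,\theta(G(\bar{x})\mid\overline{Y})$ for which \eqref{svs-equ} fails. Ensuring that $r$ can be chosen large enough uniformly to absorb the mixed $(\xi,\eta)$ contributions, and that the epi-limits of the rescaled quadratic forms genuinely land in the quadratic bundle rather than some strictly larger set, is the main technical bookkeeping.
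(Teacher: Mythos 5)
You should first note what the paper actually does here: Proposition \ref{prop:thm5} is not proved in this paper at all; it is quoted verbatim from Rockafellar's work (\cite[Theorem 5]{Roh22}). So your proposal is in effect an attempt to reprove Rockafellar's theorem, and judged on that basis it is a reasonable outline of the right circle of ideas (variational strong convexity of $\phi_r$, monotonicity of $\partial\phi_r$, second subderivatives and their epi-limits, with the chain rule for $\phi_r(x,u)=f(x)+\theta(G(x)+u)+\tfrac{r}{2}\|u\|^2$ being unproblematic because the inner map has surjective derivative in $u$ and $\theta$ is convex, so $d^2\theta\ge 0$ and the Lagrangian curvature term comes out correctly). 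The forward direction as you set it up does go through: from a uniform modulus at nearby generalized-twice-differentiable points one plugs in the recovery sequences from epi-convergence with $\eta^k\to 0$ and obtains \eqref{svs-equ}.

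The genuine gaps are in your first two ``equivalences,'' which you treat as known but which carry essentially the full weight of the theorem. The statement that variational strong convexity is equivalent to local strong monotonicity of the subgradient map is not a classical strong-convexity fact; it is a recent theorem (Rockafellar's variational-convexity paper \cite{rockvtn}, and in full two-sided form \cite{KMT22}) valid under prox-regularity and subdifferential continuity, which hold for $\phi_r$ by amenability but must be invoked and verified, not assumed. More seriously, your second step --- that local strong monotonicity of $\partial\phi_r$ is ``captured by'' uniform positive definiteness of $\tfrac12 d^2\phi_r$ at all nearby points of ${\rm gph}\,\partial\phi_r$ where $\phi_r$ is generalized twice differentiable --- is precisely the heart of \cite[Theorem 5]{Roh22} and is asserted without proof. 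It requires (a) a density/genericity result ensuring that generalized twice differentiable points of $\theta$ (hence of $\phi_r$) are plentiful enough in ${\rm gph}\,\partial\theta$ near $(G(\bar{x}),\overline{Y})$ that second-order information there controls monotonicity everywhere nearby, and (b) an argument passing from pointwise second-order positivity back to strong monotonicity of the set-valued map, neither of which follows from standard calculus. Finally, in the converse direction your compactness argument needs two facts you only flag: that epi-limits of the extracted forms $\tfrac12 d^2\theta(X^k\mid Y^k)$ are again generalized quadratic forms (so that they genuinely land in ${\rm quad}\,\theta(G(\bar{x})\mid\overline{Y})$ as defined in Definition \ref{def:quadratic bundle}), and that $r$ and the modulus can be chosen uniformly over the neighborhood and the bundle. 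As written, the proposal is a plausible roadmap that defers the decisive steps to unproved assertions, so it cannot stand as a proof of the proposition; citing \cite[Theorem 5]{Roh22}, as the paper does, or carrying out (a)--(b) in detail, is what is actually needed.
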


\section{The characterization of strong variational sufficient condition for NLSDP}\label{sec:main}
In this section, we will study the relationship between strong variational sufficient condition and the well-known strong SOSC \eqref{eq:defssosc} for NLSDP by combining  \eqref{svs-equ} and \cite[Theorem 3.3]{mohammadi20} together. Firstly, we use nonlinear second order cone programming (NLSOC) as an example to illustrate our approach. The general NLSOC  can be written as
\begin{equation}\label{eq:SOC}
	\begin{array}{cl}
		\displaystyle{\min_{x\in \mathbb{X}}} & f(x) \\ [3pt]
		{\rm s.t.} &{g(x)\in {\cal K}},
	\end{array}
\end{equation}
where $f,g$ are both twice continuously differentiable, ${\cal K}$ is the second order cone defined as ${\cal K}=\{(u_1,\dots,u_m)\in\mathbb{R}^m\mid u_1\geq\|(u_2,\dots,u_m)\|\}=\{u\mid h(u)\leq0\} $
with $h(u)=-u_1+\|(u_2,\dots,u_m)\|$.
The ordinary Lagrangian function of problem \eqref{eq:SOC}
is defined by
\begin{equation}\label{eq:L-functionsoc}
	L(x,y):=f(x) +\langle y,g(x)\rangle, \quad (x,y)\in\mathbb{X}\times\mathbb{R}^m.
\end{equation}
Given a stationary point $\bar{x}$. Let
$${\cal M}_{\rm soc}(\bar{x}):=\left\{y\in \mathbb{R}^m\Bigg | \left.
\begin{array}{l}
	L_x'(\bar{x},y)=0,\\ [3pt]
	{\cal K}\ni g(\bar{x})\perp y \in {\cal K}^{\circ}
\end{array}
\right.\right \}. $$
be the set of all multipliers $y\in\mathbb{R}^m$ satisfying the KKT condition for \eqref{eq:SOC}, where ${\cal K}^{\circ}$ is the polar cone of ${\cal K}$ defined in \cite[Section 14]{Rockafellar1970}.
The strong SOSC at $(\bar{x},\bar{y})$ is defined as
\begin{equation}\label{eq:defssoscsoc}
	\langle L_{xx}''(\bar{x},\bar{y})d,d\rangle-\varUpsilon_{g(\bar{x})}\big(\bar{y},g'(\bar{x}){d}\big)>0 \quad\forall\; 0\neq g'(\bar{x})d\in{\rm aff}\,{\cal C}_{\cal K}(g(\bar{x}),\bar{y}),
\end{equation}
where the explicit form of $\varUpsilon_{g(\bar{x})}\big(\bar{y},g'(\bar{x}){d}\big)$ is given in \cite[Theorem 29]{bsoc}.

As illustrated in \cite[Example 3]{Roh22}, the explicit form of the quadratic bundle ${\rm quad}\,\delta_{\cal K}(g(\bar{x})\mid\bar{y})$ for certain reference KKT point $(\bar{x},\bar{y})$ can be obtained by using corresponding results for polyhedral problems. For example,
if $g(\bar{x})\in{\rm int}\,{\cal K}$, we have $\bar{y}=0$. It can be checked directly by using \cite[Theorem 3.3]{mohammadi20} and \cite[(36),(43)]{bsoc} that the quadratic bundle ${\rm quad}\,\delta_{\cal K}(g(\bar{x})\mid \bar{y})$ consists of solely $q\equiv0$.
If $g(\bar{x})\in{\rm bd}\,{\cal K}\backslash\{0\}$, for any nonzero $\bar{y}\in{\cal N}_{\cal K}(g(\bar{x}))$, the quadratic bundle ${\rm quad}\,\delta_{\cal K}(g(\bar{x})\mid \bar{y})$ consists of the generalized quadratic form
\begin{equation}\label{eq:qbsoc1}
	q=\frac{1}{2}d^2\delta_{\cal K}(g(\bar{x})\mid \bar{y})\quad{\rm with}\quad q(w)=\left\{\begin{array}{cc}
		\frac{1}{2}w\cdot h''(g(\bar{x}))w & {\rm if}\;h'(g(\bar{x}))w=0, \\
		\infty & {\rm otherwise. }
	\end{array} \right.
\end{equation}
If $g(\bar{x})=0$ and $\bar{y}\in{\rm int}(-{\cal K})$, the quadratic bundle consists of $q=\delta_{\{0\}}$. If $g(\bar{x})=0$ and $\bar{y}\in{\rm bd}(-{\cal K})\backslash\{0\}$, both $\delta_{\{0\}}$ and \eqref{eq:qbsoc1} constitute the quadratic bundle.  It can be checked directly that for NLSOC, strong variational sufficient condition is equivalent to strong SOSC when the reference point $(\bar{x},\bar{y})$ lies in one of the above circumstances.

Thus,  only two circumstances for NLSOC remain to be discussed.
The first one is $g(\bar{x})=0$ and $\bar{y}=0$. Since ${\cal K}$ is ${\cal C}^2-$cone reducible, we know from \cite[Theorem 3.3]{mohammadi20}, \cite[Theorem 29]{bsoc} that for any $(g^k,y^k)\rightarrow(g(\bar{x}),\bar{y})$ and $w\in\mathbb{R}^m$,
\begin{equation}\label{eq:qbgeq0}
d^2\delta_{{\cal K}}(g^k\mid y^k)(w)=w^T{\cal H}(g^k,y^k)w+\delta_{{\cal C}_{\cal K}(g^k,y^k)}(w)\geq0,
\end{equation}
where ${\cal H}(g^k,y^k)=\frac{y^k_1}{g^k_1}(g^k)'^T(1\;0^T;0\;-I_{m-1})(g^k)'$ if $g^k\in{\rm bd}\,{\cal K}\backslash\{0\}$ and ${\cal H}(g^k,y^k)=0$ otherwise. If we pick $g^k\in{\rm int}\,{\cal K}\rightarrow g(\bar{x})$ and $y^k=0$ for all $k$, it is easy to see from \cite[Theorem 29, (35), (36)]{bsoc} that
$$d^2\delta_{{\cal K}}(g^k\mid y^k)(w)=0+\delta_{{\cal C}_{\cal K}(g^k,y^k)}(w),$$
where ${\cal C}_{\cal K}(g^k,y^k)=\mathbb{R}^m$. Thus we have proved $0\in{\rm quad}\,\delta_{\cal K}(g(\bar{x})\mid \bar{y})$. Combining this with \eqref{eq:qbgeq0}, it follows that in this case, \eqref{svs-equ} is equivalent to strong SOSC.

The second case is $g(\bar{x})\in{\rm bd}{\cal K}\backslash\{0\}$ and $\bar{y}=0$. Let $g^k\in{\rm int}\,{\cal K}\rightarrow g(\bar{x})$ and $y^k=0$ for each $k$. We know that
$$d^2\delta_{{\cal K}}(g^k\mid y^k)(w)=0+\delta_{{\cal C}_{\cal K}(g^k,y^k)}(w),$$
where ${\cal C}_{\cal K}(g^k,y^k)=\mathbb{R}^m$. As $k\rightarrow\infty$, we know that $\lim_{k\rightarrow\infty}d^2\delta_{{\cal K}}(g^k\mid y^k)(w)=0$. Using \cite[(35), (36), (43)]{bsoc} again, we also have \eqref{svs-equ} is equivalent to strong SOSC.

Thus, we immediately obtain the following characterization of strong variational sufficiency for NLSOC, which is a supplement for \cite[Example 3]{Roh22}.
\begin{proposition}\label{thm:varscequissoscsoc}
	Let $\bar{x}\in\mathbb{X}$ be a stationary point to the NLSOC \eqref{eq:SOC} and $\bar{y}\in {\cal M}(\bar{x})$. The strong variational sufficient condition \eqref{svs-equ} with respect to $(\bar{x},\bar{y})$ holds if and only if  the strong SOSC \eqref{eq:defssoscsoc} holds at $(\bar{x},\bar{y})$.
\end{proposition}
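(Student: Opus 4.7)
The plan is to invoke Proposition \ref{prop:thm5}, which reduces the strong variational sufficient condition with respect to $(\bar{x},\bar{y})$ to verifying the pointwise positivity \eqref{svs-equ} for every element $q$ of the quadratic bundle $\mathrm{quad}\,\delta_{\mathcal{K}}(g(\bar{x})\mid\bar{y})$. Since the second order cone $\mathcal{K}$ is $\mathcal{C}^2$-cone reducible, \cite[Theorem 3.3]{mohammadi20} gives an explicit formula for $d^2\delta_{\mathcal{K}}(g\mid y)$ in terms of a Hessian-like symmetric form plus the indicator of the critical cone $\mathcal{C}_{\mathcal{K}}(g,y)$, while \cite[Theorem 29]{bsoc} gives the matching expression for the $\sigma$-term in the strong SOSC \eqref{eq:defssoscsoc}. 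The strategy is to compute $\mathrm{quad}\,\delta_{\mathcal{K}}(g(\bar{x})\mid\bar{y})$ explicitly in each KKT configuration and to match \eqref{svs-equ} with \eqref{eq:defssoscsoc} using these two formulas.

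First I would partition the argument according to where $(g(\bar{x}),\bar{y})$ lives. The non-degenerate configurations, namely (i) $g(\bar{x})\in\mathrm{int}\,\mathcal{K}$ with $\bar{y}=0$; (ii) $g(\bar{x})\in\mathrm{bd}\,\mathcal{K}\setminus\{0\}$ with $\bar{y}\neq 0$; (iii) $g(\bar{x})=0$ with $\bar{y}\in\mathrm{int}(-\mathcal{K})$; and (iv) $g(\bar{x})=0$ with $\bar{y}\in\mathrm{bd}(-\mathcal{K})\setminus\{0\}$, are already resolved in \cite[Example 3]{Roh22}: in each such case the quadratic bundle consists of either $q\equiv 0$, or $q=\delta_{\{0\}}$, or the single generalized quadratic form displayed in \eqref{eq:qbsoc1}, or a pair of these, and the equivalence between \eqref{svs-equ} and \eqref{eq:defssoscsoc} follows from direct inspection using the explicit $\sigma$-term formula. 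I would simply collect these and cite them.

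The remaining two degenerate cases, $g(\bar{x})=0$ with $\bar{y}=0$ and $g(\bar{x})\in\mathrm{bd}\,\mathcal{K}\setminus\{0\}$ with $\bar{y}=0$, are the ones that must be worked out in detail. In both I would exploit $\mathcal{C}^2$-cone reducibility together with \cite[Theorem 3.3]{mohammadi20} to write
\[
d^2\delta_{\mathcal{K}}(g^k\mid y^k)(w)= w^\top \mathcal{H}(g^k,y^k)w+\delta_{\mathcal{C}_{\mathcal{K}}(g^k,y^k)}(w)\ge 0
\]
along any admissible sequence $(g^k,y^k)\to(g(\bar{x}),\bar{y})$, where $\mathcal{H}(g^k,y^k)$ vanishes as soon as $g^k\in\mathrm{int}\,\mathcal{K}$. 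Choosing $g^k\in\mathrm{int}\,\mathcal{K}$ with $g^k\to g(\bar{x})$ and $y^k\equiv 0$ forces $\mathcal{C}_{\mathcal{K}}(g^k,y^k)=\mathbb{R}^m$ and hence $d^2\delta_{\mathcal{K}}(g^k\mid y^k)\to 0$ epigraphically, showing $0\in\mathrm{quad}\,\delta_{\mathcal{K}}(g(\bar{x})\mid\bar{y})$. Combining this with the non-negativity of the other bundle elements and the fact that the $\sigma$-term in \eqref{eq:defssoscsoc} vanishes when $\bar{y}=0$ (again by \cite[(35),(36),(43)]{bsoc}), the condition \eqref{svs-equ} collapses to the pointwise positivity of $\langle L''_{xx}(\bar{x},\bar{y})d,d\rangle$, which is exactly the strong SOSC at such degenerate $(\bar{x},\bar{y})$.

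The main obstacle I anticipate is not the computation itself but rather making sure that the generating sequences in the two degenerate cases are \emph{admissible} in the sense of Definition \ref{def:quadratic bundle}, i.e.\ that $\delta_{\mathcal{K}}$ is generalized twice differentiable at the chosen $(g^k,y^k)$, and that the epigraphical convergence to the candidate $q$ is justified. This amounts to a careful but essentially routine verification via $\mathcal{C}^2$-cone reducibility and \cite[Theorem 3.3]{mohammadi20}. Once these points are in place, the equivalence in every case reduces to the algebraic matching between the two formulas for the $\sigma$-term and the quadratic bundle element, and Proposition \ref{thm:varscequissoscsoc} follows.
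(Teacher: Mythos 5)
Your proposal is correct and follows essentially the same route as the paper: citing \cite[Example 3]{Roh22} for the nondegenerate configurations, and for the two degenerate cases with $\bar{y}=0$ using \cite[Theorem 3.3]{mohammadi20} and \cite[Theorem 29]{bsoc} together with an interior-point sequence $g^k\in{\rm int}\,{\cal K}$, $y^k=0$ to show $0\in{\rm quad}\,\delta_{\cal K}(g(\bar{x})\mid\bar{y})$, then exploiting the nonnegativity of all bundle elements and the vanishing $\sigma$-term to match \eqref{svs-equ} with \eqref{eq:defssoscsoc}. Your extra attention to the admissibility of the generating sequences is a point the paper treats implicitly, but it does not change the argument.
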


It is worth noting that the above proposition is not surprising. If $g(\bar{x})\neq0$, we can obtain this result by regarding the NLSOC problem as a polyhedral problem as shown in \cite[Example 3]{Roh22}.
If $g(\bar{x})=0$, the $\sigma-$term happens to be 0, which makes the calculation of the quadratic bundle much simpler. When it comes to NLSDP problem \eqref{eq:NLSDP}, things are not so easy as we can neither regard SDP as a polyhedral problem nor have a simplification for its $\sigma-$term \eqref{eq:sigtsdp}. However, the success of this approach to NLSOC gives us a hint that this approach may also work for NLSDP.
Before we put forward our main result, we need the following proposition on the quadratic bundle defined by Definition \ref{def:quadratic bundle} for $\delta_{\mathbb{S}_+^n}$.
\begin{proposition}\label{prop:onesidessosc}
	Let $\bar{x}\in\mathbb{X}$ be a stationary point to NLSDP \eqref{eq:NLSDP} and $\overline{Y}\in {\cal M}(\bar{x})$, where ${\cal M}(\bar{x})$ is given in \eqref{eq:defcalm}. Let $A=G(\bar{x})+\overline{Y}$, which possesses the decomposition \eqref{eq:eig-decomp}. Then, there exists $q\in {\rm quad}\,\delta_{\mathbb{S}_+^n}(G(\bar{x})\mid\overline{Y})$ such that for all $H\in\mathbb{S}^n$,
	\begin{align}
		q(H)&=-\frac{1}{2}\varUpsilon_{G(\bar{x})}\big(\overline{Y},H\big)+\delta_{{\rm aff}\,{\cal C}_{\mathbb{S}_+^n}(G(\bar{x}),\overline{Y})}(H)\nonumber\\
		&=\sum_{i\in\alpha,j\in\gamma}\frac{-\lambda_j(A)}{\lambda_i(A)}(\widetilde{H}_{ij})^2+\delta_{{\rm aff}\,{\cal C}_{\mathbb{S}_+^n}(G(\bar{x}),\overline{Y})}(H),\label{eq:qeqsg}
	\end{align}
	where $\widetilde{H}=P^THP$.
\end{proposition}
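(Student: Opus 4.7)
The plan is to exhibit $q$ explicitly as the epi-limit of the generalized quadratic forms $q_k := \tfrac{1}{2} d^2 \delta_{\mathbb{S}_+^n}(X^k \mid Y^k)$ along a carefully chosen sequence $(X^k, Y^k) \to (G(\bar{x}), \overline{Y})$ lying in ${\rm gph}\,{\cal N}_{\mathbb{S}_+^n}$. The guiding idea, inspired by the NLSOC discussion above, is to perturb the spectrum of $A = G(\bar{x}) + \overline{Y}$ so that the zero eigenvalues indexed by $\beta$ are pushed slightly into the positive block; this empties $\beta^k$, so the critical cone at $(X^k, Y^k)$ becomes a subspace and the second subderivative formula furnished by \cite[Theorem 3.3]{mohammadi20} simplifies dramatically.

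Concretely, I would set $X^k := G(\bar{x}) + \tfrac{1}{k} P_\beta P_\beta^T$ and $Y^k := \overline{Y}$. One checks directly that $X^k \succeq 0$, $Y^k \preceq 0$ and $X^k Y^k = 0$, so $Y^k \in {\cal N}_{\mathbb{S}_+^n}(X^k)$. In the basis $P = [P_\alpha\;P_\beta\;P_\gamma]$ the matrix $A^k := X^k + Y^k$ is block-diagonal with eigenvalues $\{\lambda_i(A)\}_{i\in\alpha}$, $\tfrac{1}{k}$ repeated $|\beta|$ times, and $\{\lambda_j(A)\}_{j\in\gamma}$; hence $\alpha^k = \alpha\cup\beta$, $\beta^k = \emptyset$ and $\gamma^k = \gamma$. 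Since $\mathbb{S}_+^n$ is $C^2$-cone reducible, \cite[Theorem 3.3]{mohammadi20} grants generalized twice differentiability of $\delta_{\mathbb{S}_+^n}$ at $X^k$ for $Y^k$ together with
\begin{equation*}
d^2 \delta_{\mathbb{S}_+^n}(X^k \mid Y^k)(H) = -\varUpsilon_{X^k}(Y^k, H) + \delta_{{\cal C}_{\mathbb{S}_+^n}(X^k, Y^k)}(H).
\end{equation*}
By \eqref{eq:cricsdp} the critical cone collapses to the subspace $\{H : P_\gamma^T H P_\gamma = 0\}$, and by \eqref{eq:sigtsdp} the $\sigma$-term splits as
\begin{equation*}
\varUpsilon_{X^k}(Y^k, H) = 2 \sum_{i\in\alpha,\,j\in\gamma} \tfrac{\lambda_j(A)}{\lambda_i(A)}\, \widetilde{H}_{ij}^2 + 2k \sum_{i\in\beta,\,j\in\gamma} \lambda_j(A)\, \widetilde{H}_{ij}^2.
\end{equation*}

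The core step is then to verify that $q_k$ epi-converges to the $q$ displayed in \eqref{eq:qeqsg}. The recovery sequence is immediate: for any $H \in {\rm aff}\,{\cal C}_{\mathbb{S}_+^n}(G(\bar{x}), \overline{Y})$ I would take $H^k \equiv H$; then $\widetilde{H}_{\beta\gamma} = 0 = \widetilde{H}_{\gamma\gamma}$, the indicator vanishes, the $k$-weighted penalty is identically zero, and $q_k(H) = q(H)$ for every $k$. For the liminf inequality, any $H^k \to H$ with $\liminf_k q_k(H^k) < \infty$ must have $\widetilde{H}^k_{\gamma\gamma} = 0$ eventually, and every summand of $-\tfrac{1}{2}\varUpsilon_{X^k}(Y^k, H^k)$ is nonnegative because $\lambda_j(A) < 0$ for $j \in \gamma$. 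Consequently, $\widetilde{H}_{\gamma\gamma} \neq 0$ or $\widetilde{H}_{\beta\gamma} \neq 0$ forces $\liminf_k q_k(H^k) = +\infty = q(H)$, while in the complementary case the $\alpha\gamma$-sum converges to $q(H)$ by continuity.

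The main obstacle is exactly the liminf argument when $\widetilde{H}_{\beta\gamma} \neq 0$: one has to rule out that cancellations inside $q_k(H^k)$ can absorb the divergence of the $k$-weighted $\beta\gamma$-penalty. The crucial sign observation that each summand of $-\tfrac{1}{2}\varUpsilon_{X^k}(Y^k, H^k)$ is nonnegative, which stems from $\lambda_j(A) < 0$ on $\gamma$, is what forbids such cancellations and ultimately makes the epi-limit pin down the tight indicator $\delta_{{\rm aff}\,{\cal C}}$ in \eqref{eq:qeqsg} rather than the indicator of a strictly larger affine set.
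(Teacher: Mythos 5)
Your proposal is correct and follows essentially the same route as the paper: the identical perturbation $X^k=G(\bar{x})+z^k P_\beta P_\beta^T$ with $Y^k=\overline{Y}$ (the paper allows a general $z^k\downarrow 0$, you take $z^k=1/k$), the same invocation of \cite[Theorem 3.3]{mohammadi20} with the emptied block $\beta^k=\emptyset$ so that the critical cone becomes $\{H\mid P_\gamma^T H P_\gamma=0\}$, and the same limiting form \eqref{eq:qeqsg}. The only (harmless) difference is that you check the epi-convergence of $q_k$ directly from the liminf/recovery-sequence definition using the nonnegativity of the $\beta\gamma$-terms, whereas the paper assembles it from the epi-convergence calculus of \cite{RoWe98} (continuous convergence, monotone increase, and sums of epi-convergent functions).
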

\begin{proof}
	For each $k$, choose
	\[
	X^{k}={P} \left[ \begin{array}{ccccc} \Lambda(A)_{\alpha \alpha} & 0
		& 0  \\   0 & (z^{k})_{\beta} & 0 \\   0 & 0 &
		0 \\
	\end{array} \right] {P}^{T} \quad {\rm and} \quad Y^{k}=\overline{Y},
	\]
	where $z^k\downarrow0$ (this notation means for each $k$, $z^k>0$ and $z^k\rightarrow0$ as $k\rightarrow\infty$) with each $(z^k)_i$ non-increasing on $k$. Let $A^k=X^k+Y^k$ for each $k$. Since $\mathbb{S}_+^n$ is ${\cal C}^2-$cone reducible \cite[Example 3.140]{BShapiro00}, we know from \cite[Theorem 3.3]{mohammadi20}, \eqref{eq:cricsdp} and \eqref{eq:sigtsdp} that for any $H\in\mathbb{S}^n$,
	\begin{align*}
		\frac{1}{2}d^2\delta_{\mathbb{S}_+^n}(X^k\mid Y^k)(H)=&\sum_{i\in\alpha\cup\beta,j\in\gamma}\frac{-\lambda_j(A^k)}{\lambda_i(A^k)}\big(\widetilde{H}_{ij}\big)^2+\delta_{{\cal C}_{\mathbb{S}_+^n}(X^k,Y^k)}(H)\\
		=&\sum_{i\in\alpha,j\in\gamma}\frac{-\lambda_j(A)}{\lambda_i(A)}\big(\widetilde{H}_{ij}\big)^2+\sum_{i\in\beta,j\in\gamma}\frac{-\lambda_j(A)}{\lambda_i(X^k)}\big(\widetilde{H}_{ij}\big)^2\\
		&+\delta_{{\cal C}_{\mathbb{S}_+^n}(X^k,Y^k)}(H),
	\end{align*}
	where ${\cal C}_{\mathbb{S}_+^n}(X^k,Y^k)=\{H\in\mathbb{S}^n\mid\widetilde{H}_{\gamma\gamma}=0\}$. It is worth to note that $d^2\delta_{\mathbb{S}_+^n}(X^k\mid Y^k)(H)$ is a generalized quadratic form by using  Definition \ref{def:genguadf}, as $d^2\delta_{\mathbb{S}_+^n}(X^k\mid Y^k)(0)=0$ and $\partial d^2\delta_{\mathbb{S}_+^n}(X^k\mid Y^k)(H)={\cal R}(H)+{\cal N}_{{\cal C}_{\mathbb{S}_+^n}(X^k,Y^k)}(H)$
	with ${\cal N}_{{\cal C}_{\mathbb{S}_+^n}(X^k,Y^k)}(H)$ being a subspace and
	$${\cal R}(H)(\Delta H)=-4\sum_{i\in\alpha,j\in\gamma}\frac{\lambda_j(A)}{\lambda_i(A)}(\widetilde{H})_{ij}\cdot(P^T\Delta HP)_{ij}-4\sum_{i\in\beta,j\in\gamma}\frac{\lambda_j(A)}{\lambda_i(A)}(\widetilde{H})_{ij}\cdot(P^T\Delta HP)_{ij}$$
	being linear on $H$.
	This also implies that $\delta_{\mathbb{S}_+^n}$ is generalized twice differentiable at $X^k$ for $Y^k$.
	
Let $f_1^k(H)=\sum_{i\in\alpha,j\in\gamma}\frac{-\lambda_j(A)}{\lambda_i(A)}\big(\widetilde{H}_{ij}\big)^2$, $f_2^k(H)=\delta_{{\cal C}_{\mathbb{S}_+^n}(X^k,Y^k)}(H)$ and $f_3^k(H)=\sum_{i\in\beta,j\in\gamma}\frac{-\lambda_j(A)}{\lambda_i(X^k)}\big(\widetilde{H}_{ij}\big)^2$. By the definition of continuous convergence mentioned in \cite[page 250]{RoWe98}, we know that $f_1^k$ converges continuously to $f_1$ with
$$f_1(H)=\sum_{i\in\alpha,j\in\gamma}\frac{-\lambda_j(A)}{\lambda_i(A)}\big(\widetilde{H}_{ij}\big)^2.$$
 It follows from \cite[Theorem 7.11]{RoWe98} that $f_1^k$  epi-converges to $f_1$. It can be checked easily that $f_1^k$ also  pointwise converges \cite[page 239]{RoWe98} to $f_1$.
Since ${\cal C}_{\mathbb{S}_+^n}(X^k,Y^k)$ is a constant closed set, we know  that $f_2^k$ pointwise converges and epi-converges \cite[Proposition 4.4]{RoWe98} to $f_2$ with $f_2(H)=\delta_{{\cal C}_{\mathbb{S}_+^n}(X^k,Y^k)}(H)$. It follows from \cite[Theorem 7.46(a)]{RoWe98} that $f_1^k+f_2^k$ epi-convergences to $f_1+f_2$. Also, it can be checked directly that $f_1^k+f_2^k$ pointwise convergences to $f_1+f_2$. By the construction of $z^k$, the sequence of $\{f_3^k\}$ is nondecreasing ($f_3^k\leq f_3^{k+1}$). We know from \cite[Proposition 7.4]{RoWe98} that $f_3^k$ epi-converges to $\sup_k\{\mbox{cl}f_3^k\}$, where $\mbox{cl}f_3^k$ is the closure of $f_3^k$. It is easy to see that $\sup_k\{\mbox{cl}f_3^k\}=\sup_k \{f_3^k\}=\delta_{\cal V}$ with ${\cal V}=\{H\in\mathbb{S}^n\mid\widetilde{H}_{\beta\gamma}=0\}$ and $f_3^k$ pointwise converges to $\delta_{\cal V}$. Since $f_1^k+f_2^k$ pointwise and epi-converges to $f_1+f_2$, by using \cite[Theorem 7.46(a)]{RoWe98} again, we obtain that $f_1^k+f_2^k+f_3^k$ epi-converges to $f_1+f_2+f_3$.

Combining the above discussion with \eqref{eq:affcsdp}, the generalized quadratic form $\frac{1}{2}d^2\delta_{\mathbb{S}_+^n}(X^k\mid Y^k)(H)$ converges epigraphically to generalized quadratic form
	$$\sum_{i\in\alpha,j\in\gamma}\frac{-\lambda_j(A)}{\lambda_i(A)}(\widetilde{H}_{ij})^2+\delta_{{\rm aff}\,{\cal C}_{\mathbb{S}_+^n}(G(\bar{x}),\overline{Y})}(H). $$
	Thus we have verified this proposition.
\end{proof}

The following result is on the explicit characterization of the strong variational sufficiency of local optimality for NLSDP, which is the main result of this paper.
\begin{theorem}\label{thm:varscequissosc}
	Let $\bar{x}\in\mathbb{X}$ be a stationary point to the NLSDP \eqref{eq:NLSDP} and $\overline{Y}\in {\cal M}(\bar{x})$. Then the following three conditions are equivalent
	\begin{itemize}
		\item[(i)] the strong variational sufficient condition with respect to $(\bar{x},\overline{Y})$ holds;
		\item[(ii)] the strong second order sufficient condition (SOSC) \eqref{eq:defssosc} holds at $(\bar{x},\overline{Y})$;
		\item[(iii)] there exist $\rho_0>0$ and $\underline{\eta}>0$ such that for any $\rho\geq \rho_0$ and any $W\in\partial_B\Pi_{\mathbb{S}_-^n}(G(\bar{x})+\rho^{-1}\overline{Y})$,
		\begin{equation*}\label{eq:psdofpb}
		\langle d, {\cal A}_{\rho}(\overline{Y},W)d\rangle\geq\underline{\eta}\|d\|^2\quad\forall d\in\mathbb{X},
		\end{equation*}
		where ${\cal A}_{\rho}(\overline{Y},W)$ is defined by \eqref{eq:algrad2}.
	\end{itemize}
\end{theorem}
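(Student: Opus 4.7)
The plan is to establish the chain (i) $\Rightarrow$ (ii) $\Rightarrow$ (iii) $\Rightarrow$ (i). Step (i) $\Rightarrow$ (ii) I would handle by substituting into the variational-sufficiency criterion \eqref{svs-equ} the specific element
\[
q(H)=-\tfrac12\varUpsilon_{G(\bar x)}(\overline Y,H)+\delta_{\mathrm{aff}\,\mathcal C_{\mathbb S^n_+}(G(\bar x),\overline Y)}(H)
\]
of the quadratic bundle produced by Proposition \ref{prop:onesidessosc}. The indicator kills \eqref{svs-equ} outside $\mathrm{aff}\,\mathcal C_{\mathbb S^n_+}(G(\bar x),\overline Y)$, while on that affine hull what remains is precisely $\langle L''_{xx}(\bar x,\overline Y)d,d\rangle-\varUpsilon_{G(\bar x)}(\overline Y,G'(\bar x)d)>0$, i.e.\ strong SOSC.

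For (ii) $\Rightarrow$ (iii) I would argue by contradiction. If (iii) fails, a diagonal selection produces sequences $\rho^k\to\infty$, $W^k\in\partial_B\Pi_{\mathbb S^n_-}(G(\bar x)+(\rho^k)^{-1}\overline Y)$, and unit vectors $d^k\to d$ with $\langle d^k,{\cal A}_{\rho^k}(\overline Y,W^k)d^k\rangle\to 0$. Since each $W^k$ is encoded by an orthogonal matrix $Q^k\in\mathcal Q^{|\beta|}$ and symmetric weights $(\Omega_{\rho^k})_{ij}\in[0,1]$, both bounded, one may pass to a subsequence on which $Q^k\to Q$. All four $\rho^k$-weighted summands in \eqref{eq:expcalA} are nonnegative, so boundedness of their sum together with $\rho^k\to\infty$ forces $\widetilde H_{\gamma\gamma}=\widetilde H_{\beta\gamma}=0$ (whence $H:=G'(\bar x)d\in\mathrm{aff}\,\mathcal C_{\mathbb S^n_+}(G(\bar x),\overline Y)$) and makes the $\beta\beta$ contribution vanish in the limit. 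Meanwhile the coefficient $2\rho^k(-\lambda_j)/(\rho^k\lambda_i-\lambda_j)$ tends to $-2\lambda_j/\lambda_i$, so the $\alpha\gamma$ block limits to $-\varUpsilon_{G(\bar x)}(\overline Y,H)$. Passing to the limit yields $\langle L''_{xx}(\bar x,\overline Y)d,d\rangle-\varUpsilon_{G(\bar x)}(\overline Y,H)\le 0$ with $d\neq 0$ and $H\in\mathrm{aff}\,\mathcal C_{\mathbb S^n_+}(G(\bar x),\overline Y)$, contradicting strong SOSC.

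Step (iii) $\Rightarrow$ (i) is immediate from \cite[Theorem 3]{Roh22}: the parametrization $W\mapsto{\cal A}_\rho(\overline Y,W)$ exhausts the $x$-part $\pi_x\partial_B(({\cal L}_\rho)'_x)(\bar x,\overline Y)$ of the Hessian bundle at $(\bar x,\overline Y)$, and uniform positive definiteness of that bundle for some (hence all) sufficiently large $\rho$ is exactly the criterion in \cite[Theorem 3]{Roh22} that certifies strong variational sufficiency.

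The principal difficulty lies in step (ii) $\Rightarrow$ (iii): one must simultaneously control the $\rho$-dependence of $W^k$ through $Q^k$ and $\Omega_{\rho^k}$ and isolate the four $\rho$-weighted blocks in \eqref{eq:expcalA}, so that the diverging $\gamma\gamma$, $\beta\gamma$, and $\beta\beta$ penalties squeeze the limit direction into $\mathrm{aff}\,\mathcal C_{\mathbb S^n_+}(G(\bar x),\overline Y)$ while the bounded $\alpha\gamma$ penalty asymptotes to exactly the $\sigma$-term of strong SOSC. A minor subtlety is that the strong SOSC being invoked must cover every $d\neq 0$ with $G'(\bar x)d\in\mathrm{aff}\,\mathcal C_{\mathbb S^n_+}(G(\bar x),\overline Y)$, including kernel directions where the $\sigma$-term vanishes automatically, which matches the form extracted in step (i) $\Rightarrow$ (ii) via Proposition \ref{prop:onesidessosc}.
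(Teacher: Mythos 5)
Your proposal is correct, and two of the three implications coincide with the paper's proof: (i)$\Rightarrow$(ii) by substituting the bundle element of Proposition \ref{prop:onesidessosc} into \eqref{svs-equ}, and (iii)$\Rightarrow$(i) by identifying ${\cal A}_{\rho}(\overline{Y},W)$ with the $x$-part of the Hessian bundle in \cite[Theorem 3]{Roh22}. Where you diverge is (ii)$\Rightarrow$(iii): the paper argues directly, first turning strong SOSC into a uniform bound $\langle L''_{xx}(\bar x,\overline Y)d,d\rangle-\varUpsilon_{G(\bar x)}(\overline Y,G'(\bar x)d)\geq\eta_0\|d\|^2$ on the subspace $\{d:G'(\bar x)d\in{\rm aff}\,{\cal C}_{\mathbb{S}_+^n}(G(\bar x),\overline Y)\}$, then invoking the quantitative penalization result \cite[Lemma 7]{SSZhang08} to absorb the $\gamma\gamma$ and $\beta\gamma$ blocks, and finally estimating the gap between $-\varUpsilon$ and the $\alpha\gamma$ term in \eqref{eq:expcalA} by $\underline{\eta}\|d\|^2$ for large $\rho$. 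You instead run a self-contained contradiction/compactness argument (negate (iii), extract $\rho^k\to\infty$, $W^k$, unit $d^k\to d$, let the diverging penalties force $\widetilde H_{\gamma\gamma}=\widetilde H_{\beta\gamma}=0$ and pass the $\alpha\gamma$ coefficients $2\rho^k(-\lambda_j)/(\rho^k\lambda_i-\lambda_j)\to-2\lambda_j/\lambda_i$ to the limit), which buys independence from \cite[Lemma 7]{SSZhang08} at the cost of losing the explicit $\rho_0,\underline{\eta}$ bookkeeping the direct argument provides; the explicit constants are not needed for the theorem, so this is a legitimate trade. One small imprecision: boundedness of $\rho^k\sum_{i,j\in\beta}(\Omega_{\rho^k})_{ij}(P^T(G'(\bar x)d^k)P)_{ij}^2$ does not force that block to vanish in the limit (the weights $(\Omega_{\rho^k})_{ij}$ may themselves go to zero), but this does not matter—the term is nonnegative, so you simply drop it when passing to $\langle L''_{xx}(\bar x,\overline Y)d,d\rangle-\varUpsilon_{G(\bar x)}(\overline Y,G'(\bar x)d)\leq 0$; likewise the subsequence extraction of $Q^k$ is unnecessary for the same reason. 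Your closing remark about the quantifier in \eqref{eq:defssosc} covering kernel directions $G'(\bar x)d=0$ is exactly the reading the paper's own proof adopts (it writes the uniform bound for all $G'(\bar x)d\in{\rm aff}\,{\cal C}_{\mathbb{S}_+^n}(G(\bar x),\overline Y)$), and it is indeed what makes the equivalence with (i) consistent.
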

\begin{proof}
	``$(i)\Longrightarrow(ii)$":  This direction can be obtained directly from Proposition \ref{prop:onesidessosc} and Proposition \ref{prop:thm5} as we only need to substitute \eqref{eq:qeqsg} into \eqref{svs-equ}.
	
	``$(ii)\Longrightarrow(iii)$": This proof sketch is similar to \cite[Proposition 4]{SSZhang08} although they require the validity of nondegeneracy, which is superfluous here.
	It follows from Definition \ref{eq:defssosc} that there exists $\eta_0>0$ such that
	$$\langle L_{xx}''(\bar{x},\overline{Y})d,d\rangle-\varUpsilon_{G(\bar{x})}\big(\overline{Y},G'(\bar{x}){d}\big)\geq\eta_0\|d\|^2\quad\forall\; G'(\bar{x})d\in{\rm aff}\,{\cal C}_{\mathbb{S}_+^n}(G(\bar{x}),\overline{Y}).$$
	By using \cite[Lemma 7]{SSZhang08}, there exist two positive numbers $\rho_1$ and $\underline{\eta}\in(0,\eta_0/2]$ such that for any $\rho'\geq \rho_1$,
	\begin{align*}
		&\langle L_{xx}''(\bar{x},\overline{Y})d,d\rangle-\varUpsilon_{G(\bar{x})}\big(\overline{Y},G'(\bar{x}){d}\big)\\
		&+\rho'\|\overline{P}^T_{\gamma}(G'(\bar{x})d)\overline{P}_{\gamma}\|^2+\rho'\|\overline{P}^T_{\beta}(G'(\bar{x})d)\overline{P}_{\gamma}\|^2\geq2\underline{\eta}\|d\|^2\quad \forall d\in\mathbb{X}.
	\end{align*}
	Suppose a sufficient large $\rho_0\geq\rho_1$. For any $\rho\geq\rho_0$ and $d\in{\cal X}$, we have
	{\small \begin{align*}
			&-\varUpsilon_{G(\bar{x})}\big(\overline{Y},G'(\bar{x}){d}\big)-2\rho\sum_{i\in\alpha,j\in\gamma}\frac{-\lambda_j}{\rho\lambda_i-\lambda_j}(\overline{P}^T(G'(\bar{x})d)\overline{P})_{ij}^2\\
			&=2\sum\limits_{i\in\alpha,\,j\in\gamma}\frac{-\lambda_j}{\lambda_i}(\overline{P}^T(G'(\bar{x}){d})\overline{P})_{ij}^2-2\rho\sum_{i\in\alpha,j\in\gamma}\frac{-\lambda_j}{\rho\lambda_i-\lambda_j}(\overline{P}^T(G'(\bar{x})d)\overline{P})_{ij}^2\\
			&=2\sum\limits_{i\in\alpha,\,j\in\gamma}\frac{\lambda_j^2}{\lambda_i(\rho\lambda_i-\lambda_j)}(\overline{P}^T(G'(\bar{x})d)\overline{P})_{ij}^2\leq\underline{\eta}\|d\|^2,
	\end{align*}}
	where the last inequality can be obtained by sufficiently large $\rho$.
	Combining the above two relations together, we have for any $\rho\geq\rho_0$,  $\rho'\geq\rho_0\geq\rho_1$,
	\begin{align*}
		&\langle L_{xx}''(\bar{x},\overline{Y})d,d\rangle+2\rho\sum_{i\in\alpha,j\in\gamma}\frac{-\lambda_j}{\rho\lambda_i-\lambda_j}(\overline{P}^T(G'(\bar{x})d)\overline{P})_{ij}^2\\
		&+\rho'\|\overline{P}^T_{\gamma}(G'(\bar{x})d)\overline{P}_{\gamma}\|^2+\rho'\|\overline{P}^T_{\beta}(G'(\bar{x})d)\overline{P}_{\gamma}\|^2\geq\underline{\eta}\|d\|^2.
	\end{align*}
	Then it can be checked directly that for all $d\in\mathbb{X}$ and $\rho\geq\rho_0$,  $\langle d,{\cal A}_{\rho}(\overline{Y},W)d\rangle\geq\underline{\eta}\|d\|^2 $.

	``$(iii)\Longrightarrow(i)$": It follows directly from \cite[Theorem 3]{Roh22} as ${\cal A}_{\rho}(\overline{Y},W)$ coincides with \cite[(3.6)]{Roh22}.
\end{proof}
\begin{remark}
{Next, we will discuss the relationship between tilt stability and variationally strong convexity/strong variational sufficient condition. 
As shown in \cite{rockvtn}, for general optimization problem, 
the variationally strong convexity implies  the tilt stability  \cite[Definition 3]{rockvtn} of local minimizer. 
However, it is worth to note that tilt stability usually does not imply variationally strong convexity as explained in \cite[Remark 2.8]{KMT22} and \cite{rockvtn}.  
For NLSDP, the general objective function of problem \eqref{eq:NLSDPc} is amenable under the Robinson constraint qualification (RCQ) \cite{Robinson76}. 
It follows from \cite[Proposition 2.9]{KMT22} that variationally strong convexity is equivalent to tilt stability under RCQ for NLSDP. For more information on the characterization of variationally strong convexity, the readers may refer to \cite{Roh22,Rockalm,KMT22}.

As an application of the tilt stability to the minimization of $\phi_{\rho}$ \eqref{eq:phir}, Rockafellar defines the so-called augmented tilt stability \cite[(2.18)]{Roh22}. In \cite[Theorem 2]{Roh22}, Rockafellar also shows augmented tilt stability is equivalent to the strong variational sufficient condition, i.e., the variationally strong convexity of $\phi_{\rho}$, without constraint qualifications. By using Theorem \ref{thm:varscequissosc}, the strong SOSC is also equivalent to the augmented tilt stability. }

\end{remark}

Recently, Khanh et al. \cite[Theorem 6.5]{KMT22} also provide a sufficient condition for (strong) variational sufficiency of strongly amenable problem in the form of \eqref{eq:genp}. However, the strong variational sufficiency mentioned in \cite{KMT22} is different from the one used here. In \cite{KMT22}, strong variational sufficiency is defined as the variationally strong convexity of the function $x\rightarrow f(x)+\theta(G(x))$, while this paper focuses on that of the perturbed function $(x,u)\rightarrow f(x)+\theta(G(x)+u)+\frac{r}{2}\|u\|^2$. 
In their result, they require a condition named second order qualification condition \cite[(3.15)]{MR12} at $\bar{x}$, i.e.,
\begin{equation}\label{eq:sec-ord-CQ}
	{\rm ker}\,G'(\bar{x})^*\cap\partial^2\theta(G(\bar{x}),\overline{Y})(0)=\{0\},
\end{equation}
where $\partial^2\theta(G(\bar{x}),\overline{Y})$ is defined in \cite[Definition 2.1]{MR12}. It follows from \cite[Theorem 3.1]{DSYe14} that \eqref{eq:sec-ord-CQ} is equivalent to the nondegeneracy \cite[Definition 3.3]{Sun06} for NLSDP. 
By combining \cite[Theorem 6.2]{KMT22}, Theorem \ref{thm:varscequissosc} and \cite[Theorem 5.6, Lemma 6.3]{fullstability}  together, we know that under the nondegenerate condition, the two strong variational sufficiencies are the same for NLSDP. However, their relationship without the nondegeneracy condition remains unclear. 

Moreover, by using \cite[Example 2.2]{USN84}, it can be proved in a similar manner to \cite[Theorem 3]{Roh22} that the variational sufficiency of local optimality for NLSDP \eqref{eq:NLSDP} is equivalent to the existence of $\rho_0>0$ and a convex open neighborhood ${\cal V}$ of $(\bar{x},\overline{Y})$ such that for any $\rho\geq \rho_0$, $(x,Y)\in{\cal V}$ and any $W\in\partial_B\Pi_{\mathbb{S}_-^n}(G(x)+\rho^{-1}{Y})$,
\begin{equation*}
\langle d, {\cal A}_{\rho}({Y},W)d\rangle\geq0\quad\forall d\in\mathbb{X}.
\end{equation*}
However, whether the variational sufficiency of local optimality is equivalent to certain second order optimality condition without any constraint qualification for NLSDP \eqref{eq:NLSDP} is still a future work that we are working on.

\section{Semi-smooth Newton-CG based ALM for nonconvex NLSDP}\label{sec:app}

In this section, we apply the main result Theorem \ref{thm:varscequissosc} to study the local convergence rate of the (extended) ALM \eqref{eq:subp1} for solving NLSDP. The detail algorithm is stated in Algorithm \ref{algo1}.  By \cite[Theorems 1.1 and 1.2]{Rockalm}, the strong variational sufficient condition with respect to $(\bar{x},\overline{Y})$ for local optimality holds if and only if there exist $\bar{\rho}>0$ and a closed convex neighborhood ${\cal X}\times{\cal Y}$ of $(\bar{x},\overline{Y})$ such that for all $\rho\geq\bar{\rho}$, ${\cal L}_{{\rho}}(x,Y)$ is strongly convex in $x\in{\cal X}$ with modulus $s>0$ $\footnote{A function $\psi:\mathbb{X}\rightarrow\mathbb{R}$ is said to be strongly convex with modulus $s>0$ if $\psi-\frac{s}{2}\|\cdot\|^2$ is convex. }$ for all $Y\in{\cal Y}$ and concave in $Y\in{\cal Y}$ for all $x\in{\cal X}$.
\begin{algorithm}
\caption{\bf Augmented Lagrangian method for solving \eqref{eq:NLSDP}}
\label{algo1}
{\small
\begin{algorithmic}[1]
\REQUIRE Let $(x^0,Y^0)\in\mathbb{X}\times\mathbb{S}^n$, $\rho^0>\bar{\rho}$. Set $k:=0$.
\STATE If $(x^k,Y^k)$ satisfies a suitable termination criterion: STOP.
\STATE Compute $x^{k+1}$ such that
\begin{equation}\label{eq:almiter}
x^{k+1}\approx\bar{x}^{k+1}=\arg\min_{x\in{\cal X}}{\cal L}_{\rho^k}(x,Y^k).
\end{equation}
\STATE Update the vector of multipliers to
\begin{equation*}\label{eq:lambda update}
Y^{k+1}:=Y^k+\widetilde{\rho}^k\big[G(x^{k+1})-\Pi_{\mathbb{S}_+^n}(G(x^{k+1})+{Y^k}/{\rho^k})\big],
\end{equation*}
where $\widetilde{\rho}^k=\rho^k-\bar{\rho}$.
\STATE Update nondecreasing positive sequence $\rho^{k+1}$ according to certain rules.
\STATE Set $k\leftarrow k+1$ and go to 1.
\end{algorithmic}}
\end{algorithm}

In the above algorithm, subproblem \eqref{eq:almiter} is solved inexactly. Three increasing tightness stopping criteria for the updating of $x^{k+1}$ are illustrated in \cite[equation (1.15)]{Rockalm}:
\begin{equation}\label{eq:stopcrit}
\big(2\widetilde{\rho}^k[{\cal L}_{\rho^k}(x^{k+1},Y^k)-\inf_{\cal X}{\cal L}_{\rho^k}(\cdot,Y^k)]\big)^{1/2}\leq
\begin{cases}
(a) \;\epsilon_k,\\
(b) \;\epsilon_k\min\{1,\|\widetilde{\rho}^k({\cal L}_{\rho^k})_Y'(x^{k+1},Y^k)\|\},\\
(c) \;\epsilon_k\min\{1,\|\widetilde{\rho}^k({\cal L}_{\rho^k})_Y'(x^{k+1},Y^k)\|^2\}
\end{cases}
\end{equation}
$$\mbox{with}\; \epsilon_k\in(0,1) \; \sum_{k=0}^{\infty}\epsilon_k=\sigma<\infty\;{\rm and}\; \rho^k\rightarrow\rho^{\infty}\leq\infty.$$
It is worth to note that $(c)$ is first introduced in \cite{Roh21} to support linear convergence in partnership with strong variational sufficiency.
It follows from \cite[Theorem 3.1]{Rockalm} that \eqref{eq:stopcrit} can be replaced by
\begin{equation}\label{eq:stopcritrep}
\sqrt{\widetilde{\rho}^k}\|({\cal L}_{\rho^k})_x'(x^{k+1},Y^k)\|\leq
\begin{cases}
(a) \;\epsilon_k', \\
(b) \;\epsilon_k'\min\{1,\|\widetilde{\rho}^k({\cal L}_{\rho^k})_Y'(x^{k+1},Y^k)\|\}, \\
(c) \;\epsilon_k'\min\{1,\|\widetilde{\rho}^k({\cal L}_{\rho^k})_Y'(x^{k+1},Y^k)\|^2\},
\end{cases}
\end{equation}
where $\epsilon_k'=\epsilon_k\sqrt{s}$, as the strong convexity of ${\cal L}_{\rho^k}(\cdot,Y^k)$ with modulus $s$ guarantees ${\cal L}_{\rho^k}(x^{k+1},Y^k)-\inf_{\cal X}{\cal L}_{\rho^k}(\cdot,Y^k)\leq\frac{1}{2s}\|({\cal L}_{\rho^k})_x'(x^{k+1},Y^k)\|^2$.

By applying the local duality, which comes from the strong variational sufficient condition through \cite[Theorem 1]{Roh22}, we suppose $S_{KKT}(0,0)\cap{\cal X}\times{\cal Y}\neq\varnothing$,
where ${\cal X}\times{\cal Y}$ is the neighborhood mentioned at the beginning of this section. As mentioned in \cite[page 9-10]{Rockalm}, we can define the associated local primal and dual problems to ${\cal X}\times{\cal Y}$ in the following sense. The associated local primal problem is
\begin{equation}\label{eq:widehatp}
\min\; \widehat{f}(x):=\sup_{Y\in{\cal Y}}{\cal L}_{\bar{\rho}}(x,Y)\quad\mbox{over}\quad x\in{\cal X}.
\end{equation}
The associated local dual problem is
\begin{equation}\label{eq:widehatd}
\max\; \widehat{h}(Y):=\inf_{x\in{\cal X}}{\cal L}_{\bar{\rho}}(x,Y)\quad\mbox{over}\quad Y\in{\cal Y}.
\end{equation}
In \cite[Theorem 2.1]{Rockalm}, the author reveals the connection between problems \eqref{eq:NLSDP}, \eqref{eq:widehatp} and \eqref{eq:widehatd}, which is of great use in the following discussion.

\subsection{Convergence analysis of ALM}
By taking advantage of Theorem \ref{thm:varscequissosc} and the genius work \cite{Rockalm}, we can view the convergence analysis of ALM for NLSDP as a direct extension.
To explore this topic, we need the definition of bounded linear regularity of a collection of closed convex sets, which can be found in, e.g., \cite[Definition 5.6]{BBorwein96}.
\begin{definition}
	Let $D_{1}, D_{2}, \ldots, D_{m} \subseteq \mathbb{X}$ be closed convex sets for some positive integer $m .$ Suppose that $D:=D_{1} \cap D_{2} \cap \ldots \cap D_{m}$ is non-empty. The collection $\left\{D_{1}, D_{2}, \ldots, D_{m}\right\}$ is said to be boundedly linearly regular if for every bounded set $\mathcal{B} \subseteq \mathbb{X}$, there exists a constant $\kappa>0$ such that
	$$
	\operatorname{dist}(x, D) \leqslant \kappa \max \left\{\operatorname{dist}\left(x, D_{1}\right), \ldots, \operatorname{dist}\left(x, D_{m}\right)\right\}\quad  \forall x \in \mathcal{B}.
	$$
\end{definition}
A sufficient condition to guarantee the property of bounded linear regularity is established in \cite[Corollary 3]{BBLi99}.
Denote
\begin{equation}\label{eq:g1g2}
	{\cal G}_1(\bar{x})=\{Y\in\mathbb{S}^n\mid f'(\bar{x})+G'(\bar{x})^*Y=0\}\;\;{\rm and}\;\; {\cal G}_2(\bar{x})=\{Y\in\mathbb{S}^n\mid Y\in{\cal N}_{\mathbb{S}_+^n}(G(\bar{x}))\}.
\end{equation}
It is easy to see that ${\cal G}_1(\bar{x})$ is a polyhedron and ${\cal G}_2(\bar{x})$ is convex.

For a stationary point $\bar{x}$ of NLSDP \eqref{eq:NLSDP}, define
\begin{equation}\label{eq:quant}
	\xi( G'(\bar{x}))=\min\{\| G'(\bar{x})^*\eta\| : \eta\in {\cal G}_1(\bar{x})^{\perp},\ \|\eta\|=1\}.
\end{equation}
The following condition is adopted from \cite{Rockalm} as it is essential in verifying the convergence rate.
\begin{asp}\label{asp1}
	There exist $b>0$ and $\varepsilon>0$ such that the local dual problem \eqref{eq:widehatd} satisfies
	$\widehat{h}(Y)\leq\max_{\cal Y}\widehat{h}-b\,{\rm dist}^2(Y,{\cal H})\; \mbox{when} \quad\| Y-\overline{Y}\|<\varepsilon$,
	where ${\cal Y}$ is the closed convex neighborhood of $\overline{Y}$ given at the beginning of Section \ref{sec:app}, ${\cal H}:=\arg\max_{\cal Y}\widehat{h}$.
\end{asp}

The following result, which is originally proposed in \cite[Theorem 4.2]{Rockalm} for polyhedral case, provides the sufficiency of  Condition \ref{asp1}. By applying the boundedly linear regularity \cite{BBLi99}, one can obtain the results in similar approach as that of \cite[Theorem 4.2]{Rockalm}, directly. We omit it here for simplicity.  Moreover, it follows from \cite[page 34]{Rockalm} that Condition \ref{asp1} trivially holds if $G'(\bar{x})=0$.
\begin{proposition}\label{thm:c1h}
	Let $\bar{x}\in\mathbb{X}$ be a stationary point to the NLSDP \eqref{eq:NLSDP} and $\overline{Y}\in {\cal M}(\bar{x})$, where ${\cal M}(\bar{x})$ is given in \eqref{eq:defcalm}. Suppose $G'(\bar{x})\neq0$. If strong SOSC with respect to $(\bar{x},\overline{Y})$ holds and the collection $\{{\cal G}_1(\bar{x}),{\cal G}_2(\bar{x})\}$ is boundedly linearly regular, where ${\cal G}_1(\bar{x})$ and ${\cal G}_2(\bar{x})$ are given in \eqref{eq:g1g2},
	then we have $\xi(G'(\bar{x}))>0$, where $\xi(G'(\bar{x}))$ is defined by \eqref{eq:quant} and Condition \ref{asp1} holds for
	\begin{equation}\label{eq:partp12}
		b=\frac{\kappa}{a_2+a_1}\;\;\mbox{with}\;\;a_2=b_0^{-1}+2\bar{\rho}\;\;\mbox{and}\;\;a_1=\frac{2\|L_{xx}''(\bar{x},\overline{Y})+\hat{\rho}I\|}{\xi(G'(\bar{x}))^2},
	\end{equation}
	where $\hat{\rho}=\lambda_{\max}(\bar{\rho} G'(\bar{x})^* G'(\bar{x}))+\varepsilon'$, $\varepsilon'$ is some positive constant and $b_0$ is the quadratic parameter $\kappa$ given in  \cite[Proposition 2.1]{CSToh2016a}.
\end{proposition}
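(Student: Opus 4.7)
The plan is to mirror \cite[Theorem 4.2]{Rockalm} from the polyhedral case, adapting each step to $\mathbb{S}^n_+$ by invoking strong SOSC (equivalent to strong variational sufficiency by Theorem \ref{thm:varscequissosc}) and the assumed bounded linear regularity of $\{{\cal G}_1(\bar x), {\cal G}_2(\bar x)\}$. The proof splits into three stages: (i) showing $\xi(G'(\bar x)) > 0$; (ii) producing separate lower bounds on $\max_{\cal Y}\widehat h - \widehat h(Y)$ controlled by ${\rm dist}^2(Y,{\cal G}_1(\bar x))$ and ${\rm dist}^2(Y,{\cal G}_2(\bar x))$; and (iii) combining them through bounded linear regularity to conclude Condition \ref{asp1}. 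Stage (i) is purely linear-algebraic: because ${\cal G}_1(\bar x) = \overline{Y} + \ker G'(\bar x)^*$, its orthogonal complement equals $\Range(G'(\bar x))$, and the restriction of $G'(\bar x)^*$ to this subspace is injective. The assumption $G'(\bar x) \neq 0$ makes this subspace nontrivial, so the continuous functional $\eta \mapsto \|G'(\bar x)^*\eta\|$ attains a strictly positive minimum on its compact unit sphere, giving \eqref{eq:quant} $> 0$.

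For stage (ii), Theorem \ref{thm:varscequissosc} promotes strong SOSC to strong convexity of ${\cal L}_{\bar\rho}(\cdot, Y)$ on $\cal X$, so $x(Y) := \arg\min_{\cal X} {\cal L}_{\bar\rho}(\cdot, Y)$ is well-defined and Lipschitz in $Y$, and $\widehat h(Y) = {\cal L}_{\bar\rho}(x(Y), Y)$ is concave and continuously differentiable on a neighborhood of $\overline{Y}$ inside $\cal Y$. Local duality via \cite[Theorem 2.1]{Rockalm} identifies $\cal H$ with the intersection of ${\cal M}(\bar x)$ and $\cal Y$. The ${\cal G}_2$-bound, $\max_{\cal Y} \widehat h - \widehat h(Y) \geq a_2^{-1} {\rm dist}^2(Y, {\cal G}_2(\bar x))$ with $a_2 = b_0^{-1} + 2\bar\rho$, follows by applying the quadratic growth result \cite[Proposition 2.1]{CSToh2016a} to the projection structure embedded in the augmented Lagrangian \eqref{eq:deflag}. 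The ${\cal G}_1$-bound, $\max_{\cal Y} \widehat h - \widehat h(Y) \geq a_1^{-1} {\rm dist}^2(Y, {\cal G}_1(\bar x))$ with $a_1 = 2\|L''_{xx}(\bar x, \overline Y) + \hat\rho I\| / \xi(G'(\bar x))^2$, comes from combining the linear error bound $\|f'(\bar x) + G'(\bar x)^* Y\| \geq \xi(G'(\bar x))\,{\rm dist}(Y, {\cal G}_1(\bar x))$ (which is why stage (i) is needed) with a second-order expansion of $\widehat h$ whose curvature is controlled by $\|L''_{xx}(\bar x, \overline Y) + \hat\rho I\|$, the scalar $\hat\rho = \lambda_{\max}(\bar\rho G'(\bar x)^* G'(\bar x)) + \varepsilon'$ absorbing the penalty contribution uniformly on a small enough neighborhood of $\overline{Y}$.

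Stage (iii) closes the argument. Bounded linear regularity of $\{{\cal G}_1(\bar x), {\cal G}_2(\bar x)\}$ on the bounded set $\cal Y$ supplies a constant $\kappa > 0$ with ${\rm dist}(Y, {\cal H}) \leq \kappa\max\{{\rm dist}(Y, {\cal G}_1(\bar x)), {\rm dist}(Y, {\cal G}_2(\bar x))\}$ for $Y$ near $\overline{Y}$. Combining this with the two bounds of stage (ii) yields $\max_{\cal Y} \widehat h - \widehat h(Y) \geq b\,{\rm dist}^2(Y, {\cal H})$ with $b$ exactly of the form \eqref{eq:partp12}, which is Condition \ref{asp1}. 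The main obstacle is stage (ii): extracting quantitative dual curvature in the direction transverse to ${\cal G}_1$ without any constraint qualification, since nondegeneracy or SRCQ would otherwise force ${\cal M}(\bar x)$ to be a singleton and simplify matters substantially. Theorem \ref{thm:varscequissosc} is precisely the tool that removes this obstacle, because the strong convexity it guarantees makes the polyhedral-case machinery of \cite[Theorem 4.2]{Rockalm} transfer to NLSDP essentially verbatim, so the detailed computation is omitted as the paper states.
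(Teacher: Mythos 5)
Your sketch follows exactly the route the paper itself indicates: the paper states this proposition without proof, remarking only that it is obtained by adapting \cite[Theorem 4.2]{Rockalm} via the bounded linear regularity of $\{{\cal G}_1(\bar{x}),{\cal G}_2(\bar{x})\}$ (with the ${\cal G}_2$-constant coming from \cite[Proposition 2.1]{CSToh2016a}), which is precisely your three-stage argument ($\xi(G'(\bar{x}))>0$ by injectivity of $G'(\bar{x})^*$ on $\Range G'(\bar{x})$, separate quadratic growth bounds for $\widehat{h}$ against ${\cal G}_1$ and ${\cal G}_2$, then merging via the regularity constant). So the proposal is consistent with, and at least as detailed as, the paper's own treatment of this result.
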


\begin{remark}
	It follows from \cite[Proposition 3.2]{CSToh2016a} (see also \cite[Proposition 17]{CDZhao}) that the bounded linear regularity of the collection $\{{\cal G}_1(\bar{x}),{\cal G}_2(\bar{x})\}$ holds under one of the following two conditions:
	\begin{itemize}
		\item[(i)] ${\cal G}_2(\bar{x})$ is a polyhedron, i.e., $\lvert\gamma\rvert\geq n-1$, where $\gamma$ is the negative eigenvalue index set of matrix $G(\bar{x})+\overline{Y}$;
		\item[(ii)] there exists a strict complementarity KKT pair $(\bar{x}, \widetilde{Y})$ ($\widetilde{Y}$ does not have to be $\overline{Y}$), i.e., ${\rm rank}(G(\bar{x}))+{\rm rank}(\widetilde{Y})=n$.
	\end{itemize}
\end{remark}

Next, we shall present the local convergence result of ALM for NLSDP. We say a sequence $z^k>0$ converges Q-linearly to 0 at a rate $c$ if $\displaystyle\lim\sup_{k\rightarrow\infty}\frac{z^{k+1}}{z^k}\leq c<\infty$. When $c=0$, we say $z^k$ converges Q-superlinearly to 0. Moreover, a sequence $y^k>0$ converges R-linearly to 0 at a rate $c$ if $y^k\leq z^k$ with $z^k>0$  converges Q-linearly to 0 at that rate. The following closedness condition relative to the closed convex set ${\cal M}(\bar{x})$ is taken from \cite[Theorem 2.2]{Rockalm}.  Recall that ${\cal X}$, ${\cal Y}$ are the closed convex neighborhood of $\bar{x}$, $\overline{Y}$ mentioned in the beginning of Section \ref{sec:app}.
\begin{asp}\label{cond:closedc}
	We say the initial point $Y^0$ and $\sigma>0$ in \eqref{eq:stopcrit} satisfies the following closedness condition relative to the closed convex set ${\cal M}(\bar{x})$ \eqref{eq:defcalm} if there exists $\eta>{\rm dist}(Y^0,{\cal M}(\bar{x}))+\sigma$ such that
	$$\left\{Y\mid\| Y-Y^0\|\leq3\eta\right\}\subset {\cal Y}. $$
\end{asp}
It is easy to see that this condition indicates $Y^0$ to be sufficiently close to ${\cal M}(\bar{x})$ and $\overline{Y}$. Also, the computations of subproblems at each iteration need to be sufficiently accurate. If subproblems are solved exactly, a sufficient condition for the above one is that there exists $\eta>0$ such that ${\cal B}_{\eta}(Y^0)\subseteq{\rm int}\,{\cal Y}$ and ${\rm dist}(Y^0,{\cal M}(\bar{x}))\leq\eta/3$.

By using Theorem \ref{thm:varscequissosc} and \cite[Theorem 2.2, 2.3, 3.1, 3.2]{Rockalm}, we immediately get the following local convergence result of ALM for solving NLSDP.
\begin{theorem}\label{thm:almrate}
	Let $\bar{x}\in{\cal X}$ be a stationary point to the NLSDP \eqref{eq:NLSDP} and $\overline{Y}\in {\cal M}(\bar{x})$. Suppose the strong SOSC \eqref{eq:defssosc} holds at $(\bar{x},\overline{Y})$.  Let the initial point $Y^0$ and $\sigma$ in \eqref{eq:stopcrit} satisfy Condition \ref{cond:closedc}. Suppose the set $\{x\mid-({\cal L}_{\bar{\rho}})_x'(x,Y)\in{\cal N}_{\cal X}(x)\}$ is nonempty and bounded when $Y\in{\rm int}\,{\cal Y}$.
	\begin{itemize}
		\item[(i)] Under stopping criterion (\ref{eq:stopcritrep}\,a),  we have the sequence $\{Y^k\}$ converges within ${\rm int}\,{\cal Y}$ to a particular $\widehat{Y}\in{\rm int}\,{\cal Y}$. Moreover, both $x^k$ and $\bar{x}^k$ converge to $\bar{x}$.
		\item[(ii)] Stopping criterion in (i) is strengthened into (\ref{eq:stopcritrep}\,b) and suppose $\{{\cal G}_1(\bar{x}),{\cal G}_2(\bar{x})\}$ is also boundedly linearly regular, where ${\cal G}_1(\bar{x})$ and ${\cal G}_2(\bar{x})$ are given in \eqref{eq:g1g2}.
		Then we have ${\rm dist}(Y^k,{\cal M}(\bar{x}))\rightarrow0$ with
		$${\rm dist}(Y^{k+1},{\cal M}(\bar{x}))\leq\frac{1}{\sqrt{1+{b}^2(\rho^{\infty})^2}}{\rm dist}(Y^k,{\cal M}(\bar{x}))$$
		and $\bar{x}^k\rightarrow\bar{x}$ with
		$$\|\bar{x}^k-\bar{x}\|\leq\frac{1}{s}{\rm dist}(Y^k,{\cal M}(\bar{x})),$$
		where $\bar{x}^k$ is the exact solution of subproblems in \eqref{eq:almiter}, $b$ is given in Condition \ref{asp1} and $s$ is the strong convexity modulus of ${\cal L}_{\rho}(x,Y)$ on $x$ with $\rho\geq\bar{\rho}$.
		\item[(iii)] If stopping criterion in (ii) is strengthened into (\ref{eq:stopcritrep}\,c), we have
		$Y^k\rightarrow\widehat{Y}$ with
		$$\|Y^{k+1}-\widehat{Y}\|\leq\frac{1}{\sqrt{1+{b}^2(\rho^{\infty})^2}}\|Y^{k}-\widehat{Y}\|.$$
		Moreover, if the stopping criterion is further supplemented by
		$$\|({\cal L}_{\rho^k})_x'(x^{k+1},Y^k)\|\leq c\|Y^{k+1}-Y^k\|\;\mbox{for some fixed}\;c, $$
		we have $x^k\rightarrow\bar{x}$ with
		$$\|{x}^k-\bar{x}\|\leq p\|Y^{k}-\widehat{Y}\|$$
		for some $p>0$.
	\end{itemize}
\end{theorem}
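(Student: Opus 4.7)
The plan is to reduce this theorem to a direct application of the ALM convergence machinery of \cite{Rockalm} by showing that, under our hypotheses combined with Theorem \ref{thm:varscequissosc}, all the abstract ingredients demanded by that machinery are in place. First, I would invoke Theorem \ref{thm:varscequissosc} to upgrade the assumed strong SOSC at $(\bar{x},\overline{Y})$ into the strong variational sufficient condition. By \cite[Theorems 1.1 and 1.2]{Rockalm}, recalled at the beginning of Section \ref{sec:app}, this yields the existence of $\bar\rho>0$, a convex neighborhood $\mathcal{X}\times\mathcal{Y}$ of $(\bar x,\overline Y)$, and a modulus $s>0$ such that $\mathcal{L}_\rho(\cdot,Y)$ is strongly convex on $\mathcal{X}$ with modulus $s$ and $\mathcal{L}_\rho(x,\cdot)$ is concave on $\mathcal{Y}$ for every $\rho\ge\bar\rho$. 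This local convex--concave saddle structure, together with the local primal/dual problems \eqref{eq:widehatp}--\eqref{eq:widehatd}, is exactly the framework in which the four cited Rockafellar theorems operate.

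Next, I would justify using the stopping criteria \eqref{eq:stopcritrep} in place of \eqref{eq:stopcrit} as already explained in the text: the strong convexity modulus $s$ yields $\mathcal{L}_{\rho^k}(x^{k+1},Y^k)-\inf_\mathcal{X}\mathcal{L}_{\rho^k}(\cdot,Y^k)\le \tfrac{1}{2s}\|(\mathcal{L}_{\rho^k})_x'(x^{k+1},Y^k)\|^2$, so after the rescaling $\epsilon_k'=\epsilon_k\sqrt{s}$ the two families of criteria coincide. Condition \ref{cond:closedc} is precisely the closeness hypothesis on $Y^0$ required by \cite[Theorem 2.2]{Rockalm}, while the assumption that $\{x\mid -(\mathcal{L}_{\bar\rho})_x'(x,Y)\in\mathcal{N}_\mathcal{X}(x)\}$ is nonempty and bounded for $Y\in\mathrm{int}\,\mathcal{Y}$ guarantees well-posedness of the subproblems in the interior of $\mathcal{Y}$.

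With these pieces assembled, part (i) follows directly from \cite[Theorem 2.2]{Rockalm}: the dual iterates stay inside $\mathrm{int}\,\mathcal{Y}$ and converge to some $\widehat Y$, and both $x^k$ and $\bar x^k$ converge to $\bar x$. For part (ii), I would additionally invoke Proposition \ref{thm:c1h} to verify Condition \ref{asp1} under the bounded linear regularity of $\{\mathcal{G}_1(\bar x),\mathcal{G}_2(\bar x)\}$; the resulting quadratic growth of $\widehat h$ on $\mathcal{Y}$ combined with the tightened criterion (\ref{eq:stopcritrep}\,b) triggers \cite[Theorem 2.3]{Rockalm}, giving Q-linear convergence of $\mathrm{dist}(Y^k,\mathcal{M}(\bar x))$ at the asserted rate $1/\sqrt{1+b^2(\rho^\infty)^2}$; the R-linear primal estimate $\|\bar x^k-\bar x\|\le s^{-1}\mathrm{dist}(Y^k,\mathcal{M}(\bar x))$ then drops out of strong convexity, since $\bar x^k$ exactly minimizes $\mathcal{L}_{\rho^k}(\cdot,Y^k)$ and $\bar x$ minimizes $\mathcal{L}_{\rho^k}(\cdot,\Pi_{\mathcal{M}(\bar x)}(Y^k))$, so the gradient-difference inequality controls $\|\bar x^k-\bar x\|$ by $\|Y^k-\Pi_{\mathcal{M}(\bar x)}(Y^k)\|$.

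Finally, for part (iii), the stricter criterion (\ref{eq:stopcritrep}\,c) is the exact hypothesis under which \cite[Theorem 3.1]{Rockalm} produces Q-linear convergence of $\|Y^k-\widehat Y\|$ to a specific dual limit $\widehat Y$, and the supplementary bound $\|(\mathcal{L}_{\rho^k})_x'(x^{k+1},Y^k)\|\le c\|Y^{k+1}-Y^k\|$ is what \cite[Theorem 3.2]{Rockalm} uses to transfer this rate to the primal sequence with a fixed proportionality constant $p$. No additional NLSDP-specific analysis beyond Theorem \ref{thm:varscequissosc} and Proposition \ref{thm:c1h} is needed, so the only real obstacle is the bookkeeping: one must carefully match the hypotheses of each of the four Rockafellar theorems to our Conditions \ref{asp1}, \ref{cond:closedc} and to the strong convexity modulus $s$, and verify that the passage between criteria \eqref{eq:stopcrit} and \eqref{eq:stopcritrep} preserves the constants that appear in the linear convergence rates.
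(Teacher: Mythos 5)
Your proposal matches the paper's own treatment: the paper gives no separate proof, deriving the theorem "immediately" by combining Theorem \ref{thm:varscequissosc} (strong SOSC $\Leftrightarrow$ strong variational sufficiency) with \cite[Theorems 2.2, 2.3, 3.1, 3.2]{Rockalm}, together with the local strong convexity/concavity framework, Condition \ref{asp1} via Proposition \ref{thm:c1h}, Condition \ref{cond:closedc}, and the passage from \eqref{eq:stopcrit} to \eqref{eq:stopcritrep} exactly as you describe. Your assembly of these ingredients is the intended argument, so the proposal is correct and essentially identical in approach.
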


As illustrated in \cite[Theorem 2.3]{Rockalm}, the condition ``set $\{x\mid-({\cal L}_{\bar{\rho}})_x'(x,Y)\in{\cal N}_{\cal X}(x)\}$ is nonempty and bounded when $Y\in{\rm int}\,{\cal Y}$" can be reduced to ``the existence of $Y\in{\rm int}\,{\cal Y}$ such that $\{x\mid-({\cal L}_{\bar{\rho}})_x'(x,Y)\in{\cal N}_{\cal X}(x)\}$ being nonempty and bounded". This condition is trivially satisfied as ${\cal X}$ is a neighborhood of $\bar{x}$ and $(\bar{x},\overline{Y})$ belongs to the set.
It is worth to note that as illustrated in  \cite[Theorem 2.3]{Rockalm}, the result in Theorem \ref{thm:almrate} (i) only requires variational sufficiency. Under variational sufficiency, the convergence of $x^k$ to $\bar{x}$ can not be obtained.  Meanwhile, under the stopping criterion (\ref{eq:stopcritrep}\,b), we may not be able to obtain from Theorem \ref{thm:almrate} (ii) the convergence of the primal iteration sequence $\{x^k\}$, since the exact solution $\bar{x}^k$ of subproblems in \eqref{eq:almiter} $\bar{x}^k$ is unknown in practice. Next, we shall show that the KKT residual of NLSDP \eqref{eq:NLSDP} also converges R-linearly, which means that the KKT residual can be used as a verifiable stopping criterion for ALM. Its proof sketch is inspired by \cite[Theorem 2]{CSToh2016}.
	\begin{proposition}\label{cor:resrate}
		Suppose the conditions in Theorem \ref{thm:almrate} (ii) hold. Define the following residual function
		\begin{equation}\label{eq:residdef}
			R(x,Y):=\|L_x'(x,Y)\|+\|G(x)-\Pi_{\mathbb{S}_+^n}(G(x)+Y)\|.
		\end{equation}
		Then, for $k$ sufficiently large, if $\sqrt{\widetilde{\rho}^k}\epsilon_k<1$, we have there exists $a>0$ such that
		$$R(x^{k+1},Y^{k+1})\leq c^k{\rm dist}(Y^k,{\cal M}(\bar{x}))$$
		with $c^k=(\epsilon_k'\sqrt{\widetilde{\rho}^k}+(1+\bar{\rho}+a\bar{\rho})(\widetilde{\rho}^k)^{-1})(1-\sqrt{\widetilde{\rho}^k}\epsilon_k)^{-1}$.
	\end{proposition}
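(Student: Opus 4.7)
The plan is to split the residual $R(x^{k+1},Y^{k+1})$ into its gradient and feasibility parts, bound each as a multiple of $\|Y^{k+1}-Y^k\|$ via the stopping criterion \eqref{eq:stopcritrep}(b), and transfer the overall bound to ${\rm dist}(Y^k,{\cal M}(\bar x))$ using the Q-linear contraction supplied by Theorem~\ref{thm:almrate}(ii).

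For the gradient part I would first derive the identity
\[
L_x'(x^{k+1},Y^{k+1}) \,=\, ({\cal L}_{\rho^k})_x'(x^{k+1},Y^k) \,-\, \frac{\bar\rho}{\widetilde\rho^k}\,G'(x^{k+1})^*(Y^{k+1}-Y^k),
\]
obtained by substituting \eqref{eq:algrad}, applying the Moreau decomposition $\Pi_{\mathbb{S}_-^n}(Z)+\Pi_{\mathbb{S}_+^n}(Z)=Z$ at $Z=G(x^{k+1})+Y^k/\rho^k$, and using the multiplier update to rewrite $\rho^k\Pi_{\mathbb{S}_-^n}(Z)$ in terms of $Y^k$ and $Y^{k+1}-Y^k$. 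Combined with the local bound $\|G'(x^{k+1})\|\le a$ (as $x^{k+1}\to\bar x$) and \eqref{eq:stopcritrep}(b)---which via $\widetilde\rho^k({\cal L}_{\rho^k})_Y'(x^{k+1},Y^k)=Y^{k+1}-Y^k$ yields $\|({\cal L}_{\rho^k})_x'(x^{k+1},Y^k)\|\le(\epsilon_k'/\sqrt{\widetilde\rho^k})\|Y^{k+1}-Y^k\|$ once $\|Y^{k+1}-Y^k\|\le 1$---this bounds $\|L_x'(x^{k+1},Y^{k+1})\|$ by $(\epsilon_k'/\sqrt{\widetilde\rho^k}+a\bar\rho/\widetilde\rho^k)\|Y^{k+1}-Y^k\|$. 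For the feasibility part I would compare against the KKT relation $G(\bar x)=\Pi_{\mathbb{S}_+^n}(G(\bar x)+Y_*^{k+1})$ at the nearest multiplier $Y_*^{k+1}\in{\cal M}(\bar x)$ to $Y^{k+1}$; the nonexpansiveness of $\Pi_{\mathbb{S}_+^n}$ and the local Lipschitz continuity of $G$ deliver
\[
\|G(x^{k+1})-\Pi_{\mathbb{S}_+^n}(G(x^{k+1})+Y^{k+1})\|\le 2L_G\|x^{k+1}-\bar x\|+{\rm dist}(Y^{k+1},{\cal M}(\bar x)),
\]
and strong convexity of ${\cal L}_{\rho^k}(\cdot,Y^k)$ with modulus $s$ together with the stopping criterion and Theorem~\ref{thm:almrate}(ii) expresses both terms in $\|Y^{k+1}-Y^k\|$ and ${\rm dist}(Y^k,{\cal M}(\bar x))$.

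To conclude, I would dominate $\|Y^{k+1}-Y^k\|$ by ${\rm dist}(Y^k,{\cal M}(\bar x))$. Introducing $\bar x^{k+1}:=\arg\min_{x\in{\cal X}}{\cal L}_{\rho^k}(x,Y^k)$ and the corresponding exact multiplier $\bar Y^{k+1}$, the triangle inequality gives $\|Y^{k+1}-Y^k\|\le\|Y^{k+1}-\bar Y^{k+1}\|+\|\bar Y^{k+1}-Y^k\|$; strong convexity combined with \eqref{eq:stopcritrep}(b) and the Lipschitz continuity of $({\cal L}_{\rho^k})_Y'(\cdot,Y^k)$ bound the first summand by a constant multiple of $\sqrt{\widetilde\rho^k}\epsilon_k\|Y^{k+1}-Y^k\|$, while Theorem~\ref{thm:almrate}(ii) bounds the second by $(1+\alpha){\rm dist}(Y^k,{\cal M}(\bar x))$ with $\alpha=1/\sqrt{1+b^2(\rho^\infty)^2}$. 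Absorbing the inexactness error to the left produces $(1-\sqrt{\widetilde\rho^k}\epsilon_k)\|Y^{k+1}-Y^k\|$ bounded by a constant multiple of ${\rm dist}(Y^k,{\cal M}(\bar x))$; substituting into the bounds from the previous paragraph, collecting the $\sqrt{\widetilde\rho^k}$-scaled and $1/\widetilde\rho^k$-scaled pieces, and absorbing the local Lipschitz and strong-convexity constants into a single $a>0$ then produces $c^k$. The hard part will be the consistent accounting of the inexactness across all three estimates, since each of the gradient identity, the feasibility comparison, and the contraction argument for $\|Y^{k+1}-Y^k\|$ introduces an error of scale $\sqrt{\widetilde\rho^k}\epsilon_k\|Y^{k+1}-Y^k\|$ from a distinct source, and these must collapse into the single $(1-\sqrt{\widetilde\rho^k}\epsilon_k)^{-1}$ denominator, with the $\bar\rho$-appearance in the numerator traceable specifically to the correction term in the gradient identity.
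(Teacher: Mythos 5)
Your gradient-part identity is exactly the mechanism the paper uses: with $\widehat{Y}^{k+1}:=Y^k+\rho^k[G(x^{k+1})-\Pi_{\mathbb{S}_+^n}(G(x^{k+1})+Y^k/\rho^k)]$ one has $({\cal L}_{\rho^k})_x'(x^{k+1},Y^k)=L_x'(x^{k+1},\widehat{Y}^{k+1})$ and $\widehat{Y}^{k+1}-Y^{k+1}=\bar{\rho}(\widetilde{\rho}^k)^{-1}(Y^{k+1}-Y^k)$, so that piece of your plan would indeed reproduce the $\epsilon_k'\sqrt{\widetilde{\rho}^k}+a\bar{\rho}(\widetilde{\rho}^k)^{-1}$ contribution. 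The gap is in the other two estimates, where your route cannot produce the specific constant $c^k$ asserted in the statement. For the feasibility term, comparing with the exact KKT relation at $\bar{x}$ gives $\|G(x^{k+1})-\Pi_{\mathbb{S}_+^n}(G(x^{k+1})+Y^{k+1})\|\le 2L_G\|x^{k+1}-\bar{x}\|+{\rm dist}(Y^{k+1},{\cal M}(\bar{x}))$, and after converting $\|x^{k+1}-\bar{x}\|$ and ${\rm dist}(Y^{k+1},{\cal M}(\bar{x}))$ via strong convexity and the contraction of Theorem~\ref{thm:almrate}(ii) you are left with coefficients of ${\rm dist}(Y^k,{\cal M}(\bar{x}))$ of size roughly $2L_G/s$ plus the contraction factor, which neither vanish like $(\widetilde{\rho}^k)^{-1}$ nor involve only $\bar{\rho}$ and $a$; so you prove an R-linear residual bound, but with a qualitatively weaker, $\rho$-independent constant, not the stated $c^k$. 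The paper avoids this by observing that $\widehat{Y}^{k+1}\in{\cal N}_{\mathbb{S}_+^n}(z^{k+1})$ with $z^{k+1}=\Pi_{\mathbb{S}_+^n}(G(x^{k+1})+Y^k/\rho^k)$, so $z^{k+1}-\Pi_{\mathbb{S}_+^n}(z^{k+1}+\widehat{Y}^{k+1})=0$, and then uses nonexpansiveness of $X\mapsto X-\Pi_{\mathbb{S}_+^n}(X+\widehat{Y}^{k+1})$ to get $\|G(x^{k+1})-\Pi_{\mathbb{S}_+^n}(G(x^{k+1})+\widehat{Y}^{k+1})\|\le\|G(x^{k+1})-z^{k+1}\|=(\widetilde{\rho}^k)^{-1}\|Y^{k+1}-Y^k\|$, after which swapping $\widehat{Y}^{k+1}$ for $Y^{k+1}$ costs only $\bar{\rho}(\widetilde{\rho}^k)^{-1}\|Y^{k+1}-Y^k\|$; this is the idea your sketch is missing.

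The same problem recurs in your final step. To obtain the factor $(1-\sqrt{\widetilde{\rho}^k}\epsilon_k)^{-1}$ under the mere assumption $\sqrt{\widetilde{\rho}^k}\epsilon_k<1$, you need $\|Y^{k+1}-Y^k\|\le\sqrt{\widetilde{\rho}^k}\epsilon_k\|Y^{k+1}-Y^k\|+{\rm dist}(Y^k,{\cal M}(\bar{x}))$ with coefficient exactly one on the distance term. The paper gets this by taking $P_k(Y^k)=\arg\max\{\widehat{h}(Y)-\frac{1}{2\widetilde{\rho}^k}\|Y-Y^k\|^2\}$, bounding $\|Y^{k+1}-P_k(Y^k)\|$ by the square-root duality-gap estimate of \cite[(2.19)]{Rockalm} combined with the stopping test, and using the proximal-mapping property $\|P_k(Y^k)-Y^k\|\le{\rm dist}(Y^k,{\cal M}_D)={\rm dist}(Y^k,{\cal M}(\bar{x}))$ (via \cite[Theorem 2.1]{Rockalm} and \cite[Proposition 1(b)]{CSToh2016}). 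Your triangle inequality through the exact multiplier $\bar{Y}^{k+1}$ with bounds of the form $(1+\alpha){\rm dist}(Y^k,{\cal M}(\bar{x}))$ and a ``constant multiple'' of $\sqrt{\widetilde{\rho}^k}\epsilon_k\|Y^{k+1}-Y^k\|$ (the multiple being $2L_G/s$-type, not $1$) yields an absorption constant different from $(1-\sqrt{\widetilde{\rho}^k}\epsilon_k)^{-1}$ and may even require a stronger smallness condition on $\sqrt{\widetilde{\rho}^k}\epsilon_k$. So while your outline would establish some R-linear bound on the KKT residual, it does not prove the proposition as stated; the missing ingredients are the subgradient identification $\widehat{Y}^{k+1}\in\partial\delta_{\mathbb{S}_+^n}(z^{k+1})$ together with the projection-residual nonexpansiveness, and the dual proximal-point estimates from \cite{Rockalm}.
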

	\begin{proof} Let $\widehat{Y}^{k+1}=Y^k+\rho^k(G(x^{k+1})-\Pi_{\mathbb{S}_+^n}(G(x^{k+1})+(\rho^k)^{-1}Y^k))$ and $z^{k+1}=\Pi_{\mathbb{S}_+^n}(G(x^{k+1})+(\rho^k)^{-1}Y^k)$. We have $G(x^{k+1})-z^{k+1}=(\widetilde{\rho}^k)^{-1}(Y^{k+1}-Y^k)$.  It follows directly from (\ref{eq:stopcritrep}\,b) that  for each $k$,
		\begin{equation*}
			\|L_x'(x^{k+1},\widehat{Y}^{k+1})\|=\|({\cal L}_{\rho^{k}})_x'(x^{k+1},Y^{k})\|\leq \epsilon_k'\sqrt{\widetilde{\rho}^k}\|Y^{k+1}-Y^k\|.
		\end{equation*}
		Then, there exists $a>0$ such that
	{\footnotesize	\begin{align}\label{eq:resipf1}
			&\|L_x'(x^{k+1},{Y}^{k+1})\|=\|L_x'(x^{k+1},\widehat{Y}^{k+1})+G'(x^{k+1})^*(Y^{k+1}-\widehat{Y}^{k+1})\|\nonumber\\
			&\leq\|L_x'(x^{k+1},\widehat{Y}^{k+1})\|+\|G'(x^{k+1})^*(Y^{k+1}-\widehat{Y}^{k+1})\|%\nonumber\\
			\leq\|L_x'(x^{k+1},\widehat{Y}^{k+1})\|+a\|Y^{k+1}-\widehat{Y}^{k+1}\|\nonumber\\
			&=\|L_x'(x^{k+1},\widehat{Y}^{k+1})\|+a\bar{\rho}\|G(x^{k+1})-\Pi_{\mathbb{S}_+^n}(G(x^{k+1})+(\rho^k)^{-1}Y^k)\|\nonumber\\
			&=\|L_x'(x^{k+1},\widehat{Y}^{k+1})\|+a\bar{\rho}(\widetilde{\rho}^k)^{-1}\|Y^{k+1}-Y^k\|%\nonumber\\
			\leq(\epsilon_k'\sqrt{\widetilde{\rho}^k}+a\bar{\rho}(\widetilde{\rho}^k)^{-1})\|Y^{k+1}-Y^k\|,
		\end{align}}
		where the second inequality follows from the twice differentiable continuity and the boundedness of $x^k$ obtained from Theorem \ref{thm:almrate}(a).
		It can be verified directly from \cite[Theorem 2.26]{RoWe98} that $\widehat{Y}^{k+1}\in\partial\delta_{\mathbb{S}_+^n}(z^{k+1})$ and
		\begin{align*}
			&\|G(x^{k+1})-\Pi_{\mathbb{S}_+^n}(G(x^{k+1})+\widehat{Y}^{k+1})\|\nonumber\\
			&=\|G(x^{k+1})-\Pi_{\mathbb{S}_+^n}(G(x^{k+1})+\widehat{Y}^{k+1})\|-\|z^{k+1}-\Pi_{\mathbb{S}_+^n}(\widehat{Y}^{k+1}+z^{k+1})\|\nonumber\\
			&\leq\|G(x^{k+1})-\Pi_{\mathbb{S}_+^n}(G(x^{k+1})+\widehat{Y}^{k+1})-(z^{k+1}-\Pi_{\mathbb{S}_+^n}(\widehat{Y}^{k+1}+z^{k+1}))\|\nonumber\\
			&\leq\|G(x^{k+1})-z^{k+1}\|=(\widetilde{\rho}^k)^{-1}\|Y^{k+1}-Y^k\|.
		\end{align*}
		It then follows that
		\begin{align}
			&\|G(x^{k+1})-\Pi_{\mathbb{S}_+^n}(G(x^{k+1})+{Y}^{k+1})\|\nonumber\\
			&\leq\|G(x^{k+1})-\Pi_{\mathbb{S}_+^n}(G(x^{k+1})+\widehat{Y}^{k+1})\|+\|\Pi_{\mathbb{S}_+^n}(G(x^{k+1})+\widehat{Y}^{k+1})-\Pi_{\mathbb{S}_+^n}(G(x^{k+1})+{Y}^{k+1})\|\nonumber\\
			&\leq(\widetilde{\rho}^k)^{-1}\|Y^{k+1}-Y^k\|+\|\widehat{Y}^{k+1}-Y^{k+1}\|\nonumber\\
			&=(\widetilde{\rho}^k)^{-1}\|Y^{k+1}-Y^k\|+\bar{\rho}\|G(x^{k+1})-\Pi_{\mathbb{S}_+^n}(G(x^{k+1})+(\rho^k)^{-1}Y^k)\|\nonumber\\
			&=(\widetilde{\rho}^k)^{-1}\|Y^{k+1}-Y^k\|+\bar{\rho}(\widetilde{\rho}^k)^{-1}\|Y^{k+1}-Y^k\|=(1+\bar{\rho})(\widetilde{\rho}^k)^{-1}\|Y^{k+1}-Y^k\|.\label{eq:resipf2}
		\end{align}
		Combining \eqref{eq:resipf1} and \eqref{eq:resipf2} together, we obtain
		\begin{equation}\label{eq:resipf3}
			R(x^{k+1},Y^{k+1})\leq(\epsilon_k'\sqrt{\widetilde{\rho}^k}+(1+\bar{\rho}+a\bar{\rho})(\widetilde{\rho}^k)^{-1})\|Y^{k+1}-Y^k\|.
		\end{equation}
		Then we will prove $\|Y^{k+1}-Y^k\|\leq{\rm dist}(Y^k,{\cal M}(\bar{x}))$. Let $P_k(Y^k)=\arg\max\{\widehat{h}(Y)-\frac{1}{2\widetilde{\rho}^k}\|Y-Y^k\|^2\}$. Then, we have
		\begin{align}
			&\|Y^{k+1}-Y^k\|\leq\|Y^{k+1}-P_k(Y^k)\|+\|P_k(Y^k)-Y^k\|\nonumber\\
			&\leq(2\widetilde{\rho}^k({\cal L}_{\rho^k}(x^{k+1},Y^k)-\inf_{\cal X}{\cal L}_{\rho^k}(\cdot,Y^k)))^{1/2}+\|P_k(Y^k)-Y^k\|\nonumber\\
			&\leq\sqrt{\widetilde{\rho}^k}\epsilon_k\|Y^{k+1}-Y^k\|+\|P_k(Y^k)-Y^k\|,\label{eq:resipf4}
		\end{align}
		where the second inequality follows from \cite[(2.19)]{Rockalm} and the last one follows from (\ref{eq:stopcritrep}\,b) and the paragraph under \eqref{eq:stopcritrep}. By using \cite[Proposition 1(b), the proof of Lemma 3]{CSToh2016} and \cite[Theorem 2.1]{Rockalm}, we have
		\begin{equation}\label{eq:resipf5}
			\|P_k(Y^k)-Y^k\|\leq{\rm dist}(Y^k,{\cal M}_D),
		\end{equation}
		where ${\cal M}_D$ denotes the solution set of \eqref{eq:widehatd}. It follows from \cite[Theorem 2.1]{Rockalm} that ${\rm dist}(Y^k,{\cal M}_D)={\rm dist}(Y^k,{\cal M}(\bar{x})\cap{\cal Y})={\rm dist}(Y^k,{\cal M}(\bar{x}))$ since for $k$ sufficiently large, $Y^k\in{\rm int}\,{\cal Y}$. Based on \eqref{eq:resipf3}-\eqref{eq:resipf5}, we obtain that for $k$ sufficiently large,
		$$R(x^{k+1},Y^{k+1})\leq(\epsilon_k'\sqrt{\widetilde{\rho}^k}+(1+\bar{\rho}+a\bar{\rho})(\widetilde{\rho}^k)^{-1})(1-\sqrt{\widetilde{\rho}^k}\epsilon_k)^{-1}{\rm dist}(Y^k,{\cal M}(\bar{x})).$$
		This completes the proof.
	\end{proof}
	
	\begin{remark}
		We compare our results with existing ALM convergence results for nonconvex non-polyhedral problems. \cite{KSteck19} justified the primal-dual linear convergence of ALM under SOSC and strong Robinson constraint qualification (SRCQ) for ${\cal C}^2$-cone reducible constrained problems, which include NLSDP and NLSOC. \cite{SSZhang08} proved the convergence rate of NLSDP under strong SOSC  together with nondegeneracy. \cite{WDing21} does provide the linear rate under SOSC and semi-isolated calmness of the KKT pair without requiring the multiplier to be unique. But some other assumptions are also needed. In this paper, to obtain the Q-linear convergence for multiplier and R-linear convergence for primal variable, we assume strong SOSC (Definition \ref{def:sigt}), which is much stronger than the aforementioned SOSC. However, we do not assume any restriction on the dual variable.
		
		In \cite[Example 5.3]{Rockalm}, the author also studied the ALM convergence for  second order cone programming when $G(\bar{x})\neq0$. Moreover, \cite{HMSarabi} gives the primal-dual linear convergence for NLSOC when the multiplier is unique while they only require SOSC instead of strong SOSC. By using Proposition \ref{thm:varscequissoscsoc}, without any constraint qualification, the local convergence results of ALM for NLSOC can be obtained immediately.
	\end{remark}

\subsection{Solving ALM subproblem: semi-smooth Newton-CG method}
Although the convergence properties of ALM have been established, it is equally important to study how to solve the subproblem \eqref{eq:almiter}. To guarantee the convergence rate of ALM, we need to employ the stopping criterion (\ref{eq:stopcritrep}\,$b$ or $c$). To meet the requirement of this stopping criterion, we consider using the semi-smooth Newton-CG method to solve the subproblem as it possesses the quadratic convergence rate under suitable conditions.

It follows from the proposed characterization  of strong variational sufficiency (Theorem \ref{thm:varscequissosc}) that for NLSDP \eqref{eq:NLSDP} if the strong SOSC \eqref{eq:defssosc} holds then there exists a neighborhood $\mathcal{B}_r(\bar{x},\overline{Y})$ of $(\bar{x},\overline{Y})$ such that for all $(x,Y)\in\mathcal{B}_r(\bar{x},\overline{Y})$, every elements in $\pi_x\partial_B\big(({\cal L}_{\rho})_x'\big)(x,Y)$ is positive definite. Thus,
the validity of semi-smooth Newton-CG method to solve the subproblems \eqref{eq:almiter} is ensured. The algorithm to solve the $(k+1)$-th subproblem is stated below (see Algorithm \ref{algo2}).
\begin{algorithm}
	\caption{\bf Semi-smooth Newton-CG method for solving \eqref{eq:almiter}}
	\label{algo2}
	{\small
	\begin{algorithmic}[1]
		\REQUIRE  Set the initial point $x_0=x^k$, where $x^k$ is obtained from the $k$-th iteration of Algorithm \ref{algo1}. Let $\mu\in(0,1/2)$, $\tau\in(0,1]$, $\tau_1, \tau_2, \bar{\nu}\in(0,1)$ and $\theta\in(0,1)$. Set $j:=0$.
		\STATE Given a maximum number of CG iterations $t_j>0$ and compute
		$$\nu_j=\min\{\bar{\nu},\|({\cal L}_{\rho^k})_x'(x_j,Y^k)\|^{1+\tau}\}. $$
		\STATE Choose $W_j\in\partial\Pi_{\mathbb{S}_-^n}(G(x_j)+(\rho^k)^{-1}Y^k)$. Let $V_j=L_{xx}''(x_j,\Pi_{\mathbb{S}_-^n}(G(x_j)+(\rho^k)^{-1}Y^k)+\rho^kG'(x_j)^*W_jG'(x_j)$ and $\varepsilon_j=\tau_1\min\{\tau_2,\|\nabla_x{\cal L}_{\rho^k}(x_j,Y^k)\|\}$. Apply the CG algorithm (CG($\nu_j,t_j$)) mentioned in \cite[Algorithm 1]{ZSToh} to find an approximate solution $d_j\in\mathbb{X}$ to
		$$(V_j+\varepsilon_jI)d_j=-({\cal L}_{\rho^k})_x'(x_j,Y^k) $$
		such that
		$$\|(V_j+\varepsilon_jI)d_j+({\cal L}_{\rho^k})_x'(x_j,Y^k)\|\leq \nu_j.  $$
		\STATE Set $\zeta_j=\theta^{m_j}$, where $m_j$ is the first non-negative number such that
		$${\cal L}_{\rho^k}(x_j+\theta^{m_j}d_j,Y^k)\leq{\cal L}_{\rho^k}(x_j,Y^k)+\mu\theta^{m_j}\langle ({\cal L}_{\rho^k})_x'(x_j,Y^k),d^j\rangle. $$
		\STATE Set $x_{j+1}=x_j+\zeta_jd_j$ and $j=j+1$.
	\end{algorithmic}}
\end{algorithm}
The convergence analysis framework for Algorithm \ref{algo2} is well-known \cite[Theorem 3.2]{QSun93} (see also \cite[Theorem 3.4, 3.5]{ZSToh}). We omit the detailed proof here for simplicity.
\begin{proposition}\label{cor:cgnconv}
	Suppose the strong SOSC \eqref{eq:defssosc} holds at $(\bar{x},\overline{Y})\in S_{KKT}(0,0)$. Then, Algorithm \ref{algo2} is well-defined and any accumulation point $\hat{x}$ of $\{x_j\}$ generated by Algorithm \ref{algo2} is the optimal solution to the subproblem \eqref{eq:almiter}. Furthermore, suppose that at each step $j$ when CG terminates, i.e.,
	$$\|(V_j+\varepsilon_jI)d_j+({\cal L}_{\rho^k})_x'(x_j,Y^k)\|\leq \nu_j.$$
	Then the whole sequence $\{x_j\}$ converges to $\hat{x}$ and
	$$\|x_{j+1}-\hat{x}\|=O(\|x_{j}-\hat{x}\|^{1+\tau}). $$
\end{proposition}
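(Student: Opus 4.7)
The plan is to follow the standard convergence analysis framework for semismooth Newton--CG methods (as in \cite[Theorem 3.2]{QSun93} and \cite[Theorems 3.4, 3.5]{ZSToh}), using the newly established equivalence Theorem \ref{thm:varscequissosc} as the key enabler. The reason the result was previously proved only under nondegeneracy or SRCQ is that one needs uniform positive definiteness of every element of the Hessian bundle in a neighborhood of $(\bar{x},\overline{Y})$; Theorem \ref{thm:varscequissosc}\,(iii) now delivers exactly this under strong SOSC alone.

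First, I would establish a uniform positive definiteness property. By Theorem \ref{thm:varscequissosc}, there exist $\rho_0>0$ and $\underline{\eta}>0$ such that for all $\rho\geq\rho_0$ and all $W\in\partial_B\Pi_{\mathbb{S}_-^n}(G(\bar{x})+\rho^{-1}\overline{Y})$, one has $\langle d,{\cal A}_\rho(\overline{Y},W)d\rangle\geq \underline{\eta}\|d\|^2$. Upper semicontinuity of $\partial_B\Pi_{\mathbb{S}_-^n}$ and continuity of $G$, $G'$, $L''_{xx}$, together with the convergence $Y^k\to\widehat{Y}\in{\cal M}(\bar{x})$ established in Theorem \ref{thm:almrate}, imply that for every $\rho\geq\rho_0$ there is a neighborhood ${\cal B}_r(\bar{x},\overline{Y})$ on which every element of $\pi_x\partial_B(({\cal L}_{\rho})_x')(x,Y)$ is uniformly positive definite with modulus $\underline{\eta}/2$, say. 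In particular, every $V_j$ used in Algorithm \ref{algo2} is positive definite once $(x_j,Y^k)$ enters this neighborhood, so the inner CG iteration is well-defined and the Newton direction $d_j$ is a descent direction for ${\cal L}_{\rho^k}(\cdot,Y^k)$.

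Next, global convergence of $\{x_j\}$ to the unique minimizer $\hat{x}$ of \eqref{eq:almiter}: strong variational sufficiency (equivalent to strong SOSC by Theorem \ref{thm:varscequissosc}) implies that ${\cal L}_{\rho^k}(\cdot,Y^k)$ is strongly convex on ${\cal X}$, so \eqref{eq:almiter} has a unique solution $\hat{x}$. The Armijo rule with parameter $\mu\in(0,1/2)$ together with the descent property of $d_j$ yields that $\{{\cal L}_{\rho^k}(x_j,Y^k)\}$ is monotonically decreasing and bounded below; a standard argument (along the lines of \cite[Theorem 3.4]{ZSToh}) shows $({\cal L}_{\rho^k})_x'(x_j,Y^k)\to 0$, and strong convexity then forces $x_j\to\hat{x}$.

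For the local rate, I would invoke strong semismoothness of $\Pi_{\mathbb{S}_-^n}(\cdot)$, which is classical, to conclude that $({\cal L}_{\rho^k})_x'(\cdot,Y^k)$ is (strongly) semismooth with exponent $\tau\in(0,1]$ matched to that of $\Pi_{\mathbb{S}_-^n}$. Combined with the uniform positive definiteness above (which makes $(V_j+\varepsilon_jI)^{-1}$ uniformly bounded near $\hat{x}$), the CG termination tolerance $\nu_j=\min\{\bar\nu,\|({\cal L}_{\rho^k})_x'(x_j,Y^k)\|^{1+\tau}\}$, and the perturbation $\varepsilon_j=O(\|({\cal L}_{\rho^k})_x'(x_j,Y^k)\|)$, the usual semismooth inexact Newton estimate gives
\begin{equation*}
\|x_j+d_j-\hat{x}\|=O(\|x_j-\hat{x}\|^{1+\tau}).
\end{equation*}
It then remains to verify that the Armijo condition with $\mu<1/2$ eventually accepts the full step $\zeta_j=1$; this is the most delicate piece, and I would handle it by a Taylor expansion of ${\cal L}_{\rho^k}(\cdot,Y^k)$ at $x_j$, using positive definiteness of $V_j$ and the semismooth residual bound to show that for $j$ large the Armijo inequality holds with $m_j=0$. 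Combining this with the previous display yields the claimed superlinear rate of order $1+\tau$. The main obstacle is precisely this unit-step acceptance argument together with correctly propagating the CG inexactness into the semismooth Newton error bound, since all other pieces are now routine consequences of Theorem \ref{thm:varscequissosc}.
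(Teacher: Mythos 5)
Your proposal is correct and follows essentially the same route as the paper: the paper derives the positive definiteness of every element of $\pi_x\partial_B\big(({\cal L}_{\rho})_x'\big)(x,Y)$ near $(\bar{x},\overline{Y})$ from Theorem \ref{thm:varscequissosc} and then simply invokes the standard semismooth Newton--CG convergence framework of \cite[Theorem 3.2]{QSun93} and \cite[Theorems 3.4, 3.5]{ZSToh}, omitting the detailed proof. The additional steps you spell out (descent property and Armijo decrease, strong convexity of the subproblem, strong semismoothness of $\Pi_{\mathbb{S}_-^n}$, CG inexactness and unit-step acceptance) are precisely the content of that cited framework, so your write-up is a faithful expansion of the paper's intended argument rather than a different one.
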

\begin{remark}
	It is worth noting that for convex NLSDP, the convergence result of ALM can be established under weaker conditions such that the optimal $\bar{x}$ can be non-unique (see \cite[Theorem 20]{CDZhao}). This implies that when the problem is reduced to convex case, Theorem \ref{thm:almrate} is much weaker than \cite[Theorem 20]{CDZhao} as it requires strong SOSC, which implies the local uniqueness of  $\bar{x}$.
	However, the semi-smooth Newton algorithm may fail to solve the subproblem in the absence of strong SOSC since it is equivalent to the positive definiteness of the generalized Hessian of Newton equation of the subproblem \eqref{eq:almiter} (Theorem \ref{thm:varscequissosc}). Thus strong SOSC seems to be not only sufficient to the local fast linear convergence rate of ALM, but also necessary for the invertibility of generalized Hessian of augmented Lagrangian function for NLSDP, which is crucial for the semi-smooth Newton CG method for solving the ALM subproblem \eqref{eq:almiter}. 
\end{remark}

\section{Numerical experiments}\label{sec:num}
Consider the following optimization problem
\begin{equation}\label{3times3}
	\begin{array}{cl}
		\displaystyle{\min_{X\in \mathbb{S}^{n}}} & \frac{1}{2}\langle X,Q\circ X\rangle\\ [3pt]
		{\rm s.t.} &X\in \mathbb{S}^n_+\\
		& B\circ X=0,
	\end{array}
\end{equation}
where ``$\circ$" denotes the Hadamard product of matrices, i.e., for any $U$ and $V\in\mathbb{R}^{p\times q}$, $(U\circ V)_{ij}=U_{ij}V_{ij}$,
{\footnotesize
	$$
	B=\left[\begin{array}{cccc}
		0 & \cdots & 0 & 1\\
		\vdots & \ddots & \vdots & \vdots\\
		0 & \cdots & 0 & 1 \\
		1  & \cdots & 1 & 1
	\end{array}\right]\quad \mbox{and} \quad
	Q=\left[\begin{array}{ccccc}
		q & 1 &  \cdots & 1 & 0\\
		1 & q &  \cdots & 1  & 0 \\
		\vdots &  \vdots & \ddots  & \vdots & \vdots\\
		1 & 1 &  \cdots & q & 0\\
		0 &  0 & \cdots & 0 & -1
	\end{array}\right]
	$$  }
with a given $q\geq n-1$.
The Lagrangian function of \eqref{3times3} is given by
$$L(X,Y,Z)=\frac{1}{2}\langle X,Q\circ X\rangle+\langle X, Y\rangle+\langle B\circ X,Z\rangle.$$
It can be checked directly that $\overline{X}=0$ is a local optimal solution with the multiplier $\overline{Y}={\rm Diag}(0,\dots,0,-1)$ and $\overline{Z}={\rm Diag}(0,\dots,0,1)$. In fact, the corresponding multiplier set of $\overline{X}$ is given by
$${\cal M}(\overline{X})=\{(Y,Z)\in\mathbb{S}^n\times\mathbb{S}^n\mid Y+B\circ Z=0, Y\in\mathbb{S}^n_-\}.$$
As the problem \eqref{toyexample} mentioned in Introduction, it follows from  \cite[Theorem 4.1]{ZKurcyusz79} that the Robinson constraint qualification \cite{Robinson76} does not hold at $\overline{X}$, due to the unboundedness of ${\cal M}(\bar{x})$.
It is clear that $L_{XX}''(\overline{X},\overline{Y},\overline{Z})=Q$, which is positive definite over $d\in\{d\in\mathbb{S}^n\mid B\circ d=0, d\in{\rm aff}\,{\cal C}_{\mathbb{S}_+^n}(\overline{X},\overline{Y})\}$, implies the validity of strong SOSC \eqref{eq:defssosc}. Also, boundedly linear regularity is satisfied at $(\overline{X},\overline{Y},\overline{Z})$ since $(Y,Z)\in{\cal G}_1(\overline{X})\cap{\cal G}_2(\overline{X})$ with $Y={\rm Diag}(-1,\dots,-1)$, $Z={\rm Diag}(0,\dots,0,1)$.

Next, we shall apply Algorithm \ref{algo1} to solve problem \eqref{3times3} with different dimensions. The subproblem \eqref{eq:almiter} is solved by Algorithm \ref{algo2} where the exactness in \eqref{eq:stopcritrep} is chosen as $\epsilon_k'=0.01\times(1/1.05)^{k-1}$ and the stopping criterion (\ref{eq:stopcritrep}\,b) is employed. The algorithm is stopped when the KKT residual $R(x^{k},Y^k)$ defined in \eqref{eq:residdef} is less than {\tt 1e-5}. The codes are implemented in Matlab (R2018a), and the numerical experiments are run under a 64-bit MacOS on an Intel Cores i5 2.3GHz CPU with 8GB memory.
The following table (Table \ref{tab}) shows the numerical results of different dimensions of \eqref{3times3}.
\begin{table}[h]\label{tab}
	\center
	{\small\begin{tabular}{ccccc}
		\toprule %[2pt]
		$n$ & $q$ & iteration &  KKT residual & cpu(s)   \\
		\midrule %[2pt]
		3 & 2 & 8 & 8.27e-06 & 0.22  \\
		100 & 200 & 11 & 8.98e-06 & 3.11  \\
		1000 & 1500 & 21 & 9.57e-06 &1083.34  \\
		\bottomrule%[2pt]
	\end{tabular}}
	\caption{Numerical results of semi-smooth Newton-CG based ALM for problem \eqref{3times3}.}
\end{table}
Noting that the distance from $(Y^k,Z^k)$ to ${\cal M}(\overline{X})$ is difficult to compute, we use the following alternative ${\rm dist}((Y^k,Z^k),{\cal M}(\overline{X}))=O( \|Y^k+B\circ Z^k\|+\|\Pi_{\mathbb{S}_+^n}(Y^k)\|)$ as boundedly linear regularity is satisfied. The detail iterative performance of ALM for solving problem \eqref{3times3} with $n=1000$ and $q=1500$ is also reported in Figure \ref{fig4}.
\begin{figure}[h]
	\center
	\scriptsize
	\begin{tabular}{cc}
		\includegraphics[scale=0.16]{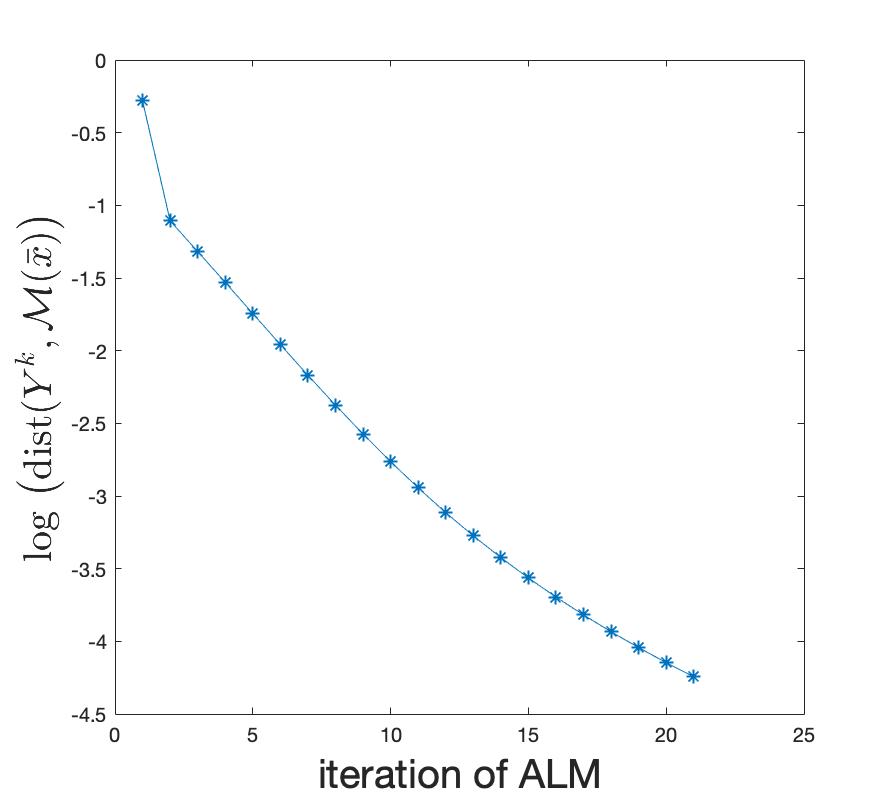} & \includegraphics[scale=0.16]{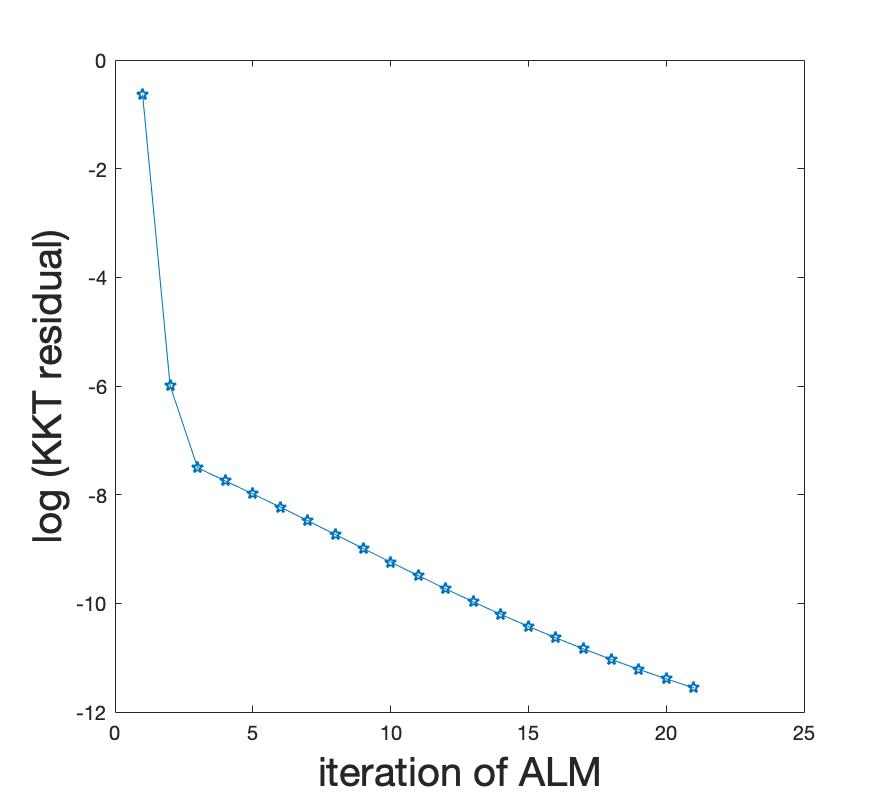}\\
		(a) & (b)\\
	\end{tabular}
	\caption{Semi-smooth Newton-CG based ALM  for problem \eqref{3times3} with $n=1000$ and $q=1500$. }\label{fig4}
	\vspace{-0.5em}
\end{figure}
It can be seen from Figure \ref{fig4} (a) that although the problem is nonconvex and the multiplier set is not a singleton, ALM also possesses a linear rate of convergence of ${\rm dist}((Y,Z),{\cal M}(\overline{X}))$ for the validity of strong variational sufficiency. Meanwhile, the right one shows that the KKT residual also converges to $0$ as the algorithm proceeds. Moreover, by strong variational sufficiency, we know from Theorem \ref{thm:varscequissosc} that the Hessian matrix $V_j$ in Algorithm \ref{algo2} Step 2 is always positive definite when $k$ is sufficiently large. Indeed, the minimum eigenvalue of the Hessian matrix $V_j$ is positive (e.g., $\lambda_{\min}(V_j)\approx 1.01$ for the case $n=3$ and $q=2$; $\lambda_{\min}(V_j)\approx 1.26$ for the case $n=100$ and $q=200$).  It is also worth to note that the distance from $(Y^k,Z^k)$ to $(\overline{Y},\overline{Z})$ does not converge to 0, which meets our theory as the limit point $(\widehat{Y},\widehat{Z})$ in Theorem \ref{thm:almrate} may be different from the reference point $(\overline{Y},\overline{Z})$.

	It is well-known that the condition number of $V_j$ in Algorithm \ref{algo2} Step 2 is proportional to $\rho^k$. In fact, the estimation of the condition number of $V_j$ can be obtained in a similar manner of  \cite[Lemma 10]{SSZhang08} as the smallest eigenvalue lies in a constant interval while the largest one lies in an interval which is propositional to $\rho^k$. Thus, in practice, we usually set an upper bound for $\rho^k$, although  $\rho^k$ can be infinity in theory.

It can be seen from Theorem \ref{thm:almrate} that the convergence analysis of Algorithm \ref{algo1} requires a good starting point of the multiplier while no restriction to the initial primal variable $x^0$ is needed. To verify the convergence theory (Theorem \ref{thm:almrate}), in the numerical experiments of \eqref{3times3} presented in Table \ref{tab}, we choose $X^0$ randomly. $(Y^0,Z^0)$ is chosen to be $(\overline{Y},\overline{Z})+\eta(P_1,P_2)$, where $P_1, P_2$ are symmetry matrices that are uniform randomly generated. Typically, $\eta$ is chosen to be small, e.g., 0.1.

Next, we will discuss how to generate the initial point more practically. Condition \ref{cond:closedc} indicates that $(Y^0,Z^0)$ should  be sufficiently close to $(\overline{Y},\overline{Z})$, at which strong SOSC is satisfied. How to find $(Y^0,Z^0)$ satisfying  Condition \ref{cond:closedc}  for nonconvex optimization problem is very challenging. A natural idea is to apply first order methods as a warm start to find a satisfactory initial point. 
Moreover, we know from Theorem \ref{thm:varscequissosc} that a good starting point may be the one at which the positive definiteness of generalized Hessian is satisfied. In practice, when the generalized Hessian is not positive definite, we may apply APG instead of the semismooth Newton to solve the ALM subproblem.

\section{Conclusion}
In this paper, we derive the equivalence between the strong variational sufficiency and the strong SOSC for NLSDP and NLSOC without requiring the uniqueness of multiplier or any other constraint qualification.
By using the equivalence result, the local convergence property of ALM for NLSDP can be established under merely strong SOSC instead of plus it with any constraint qualification. As a direct application, we are able to show that the positive definiteness of the generalized Hessian of augmented Lagrangian function, which is critical in the use of semi-smooth Newton method for NLSDP, is satisfied under strong SOSC. However, there are still many issues that deserve to be explored further. For example, we still do not have a satisfactory characterization for variational sufficiency. Moreover, for the implementation of ALM for solving nonconvex cases, it is still very challenging for finding a good starting point, e.g., the one close enough to a KKT pair of a local optimal solution which satisfies the strong SOSC condition.

\bigskip
\noindent{\bf Acknowledgements}
The authors would like to thank the two anonymous referees and the associate editor for their many valuable comments and constructive suggestions, which have substantially helped improve the quality and the presentation of this paper.

\end{document}